\newcommand\blfootnote[1]{%
    \begingroup
    \renewcommand\thefootnote{}\footnote{#1}%
    \addtocounter{footnote}{-1}%
    \endgroup
}
\newtheorem{lemma}{Lemma}[subsubsection]
\newtheorem{corollary}[lemma]{Corollary}
\newtheorem{theorem}[lemma]{Theorem}
\newtheorem{proposition}[lemma]{Proposition}
\newtheorem{definition}[lemma]{Definition}
\newtheorem{remark}[lemma]{Remark}
\newtheorem{example}[lemma]{Example}
\newcounter{lemmaaux}
\def\includegraphics{}
\def\inf{\operatorname{inf}}
\newcommand{\vertiii}[1]{{\left\vert\kern-0.25ex\left\vert\kern-0.25ex\left\vert #1 
    \right\vert\kern-0.25ex\right\vert\kern-0.25ex\right\vert}}
\begin{document}

\title{\fontsize{15}{0}\selectfont 
 K\"othe-Herz Spaces: The Amalgam-Type Spaces of Infinite Direct Sums   \\ 
}
\author{\fontsize{11}{0}\selectfont 
M. Ashraf Bhat$^1$, Pawe{\l}  Kolwicz$^2$ and G. Sankara Raju Kosuru$^*$ \\
\fontsize{10}{0}\textit{{$^{1,*}$-Department of Mathematics, Indian
Institute of Technology Ropar, Rupnagar-140001, Punjab, India.}}\\
\fontsize{10}{0}\textit{{$^{2}$-}Poznan University of Technology, Institute
of Mathematics, Piotrowo 3A, 60-965 Pozna\'{n}, Poland.}\\
}
\date{}
\maketitle

\thispagestyle{empty}

\begin{center}
\textbf{\underline{ABSTRACT}}
\end{center}
In this paper, we introduce a class of function spaces called K\"othe-Herz spaces $E(\mathcal{X})$. These spaces are similar to amalgam spaces and are characterized by a local component given by a countable family $\mathcal{X}=\left( X_{\alpha }\right) _{\alpha \in I}$ of quasi-normed function spaces, and a global component $E$, which is a quasi-normed sequence space. We investigate various geometric and topological properties inherited by $E(\mathcal{X})$ from its components, such as their completeness, duality, order continuity, ideal and Fatou properties, in an abstract setting. In addition, we provide a Banach function space characterization for $E(\mathcal{X})$, which allows us to understand its structure and behavior more deeply. Furthermore, by appropriate amalgamation of Lorentz spaces (Orlicz spaces) and Lebesgue sequence spaces, we define Lorentz-Herz spaces (Orlicz-Herz spaces) as a particular case of  $E(\mathcal{X})$, which are still generalizations of the classical Herz spaces. In this context (especially Lorentz-Herz spaces), we establish previously studied properties, demonstrate interpolation results, and prove the boundedness of important sublinear integral operators with kernels that satisfy a size condition.


\textit{Keywords}\textbf{:} (quasi-)Banach ideal spaces; K\"{o}the-Herz
spaces; Lorentz spaces; Orlicz spaces; infinite direct sums; amalgam spaces; interpolation;
integral operators.

\blfootnote{$^{*}$Corresponding Author} 
\blfootnote{G. Sankara Raju Kosuru: raju@iitrpr.ac.in} \blfootnote{%
Mohd Ashraf Bhat: ashraf74267@gmail.com} \blfootnote{%
Pawe{\l} Kolwicz: pawel.kolwicz@put.poznan.pl}

2020 \textit{Mathematics Subject Classification:} Primary 46E30; Secondary
46B20, 46B42, 46B70.\vspace{1cm}


\section{Introduction}

\counterwithin{lemma}{section}

Over the years, many function spaces have been introduced in the literature to solve various contemporary problems arising in different areas of mathematics and allied subjects such as analysis, probability, and partial differential equations. For instance, K\"othe-Bochner spaces were introduced to provide a suitable framework to define integrals of Banach space-valued functions and to study their properties. The geometry of K\"othe-Bochner space has been intensively developed during the last decades (see \cite{Lin}). 

The more general construction of infinite direct sums (substitution spaces) has been employed to analyze a variety of problems in functional analysis, particularly in the geometry of Banach spaces and the theory of Banach lattices \cite{Ku-Lan, lau}, \cite[section 4]{Le-Piazza}).

 On the other hand, classical Herz spaces were initially introduced by C. S. Herz \cite{Herz} to provide a suitable setting for the range of the Fourier transform acting on a class of Lipschitz spaces. R. Johnson \cite{eqnorm} characterized the norm of this class of functions in terms of Lebesgue space norms over annuli in $\mathbb{R}^{N}$. The formulation of Herz spaces used in this paper first appeared (probably) in \cite{Garcia94} and \cite{Lu92}.
 Let $S_{k}=\{x\in \mathbb{R}^{N}:|x|<2^{k}\}$
and $\Omega _{k}=S_{k}\setminus S_{k-1}$ for $k\in \mathbb{Z}$. Denote $%
\tilde{\chi}_{\Omega _{k}}=\chi _{\Omega _{k}}$ for $k\in \mathbb{Z}^{+}$
and $\tilde{\chi}_{\Omega _{-1}}=\chi _{S_{-1}}$.

\begin{definition}
\addtocounter{lemmaaux}{1}

For $a\in \mathbb{R}$, $0<p,q\leq \infty $. The homogeneous Herz space $\dot{%
K}_{a,q}^{p}(\mathbb{R}^{N})$ and the non-homogeneous Herz space $K_{a,q}^{p}(\mathbb{R}^{N})$ are respectively defined by 
\begin{equation*}
\dot{K}_{a,q}^{p}(\mathbb{R}^{N})=\{f\in L_{loc}^{p}(\mathbb{R}^{N}\setminus
\{0\}):\Vert f\Vert _{\dot{K}_{a,q}^{p}(\mathbb{R}^{N})}<\infty \}
\end{equation*}%
\text{and}
\begin{equation*}
~ K_{a,q}^{p}(\mathbb{R}^{N})=\{f\in L_{loc}^{p}(\mathbb{R}^{N}):\Vert f\Vert
_{K_{a,q}^{p}(\mathbb{R}^{N})}<\infty \},
\end{equation*}%
where 
\begin{equation*}
\Vert f\Vert _{\dot{K}_{a,q}^{p}(\mathbb{R}^{N})}=\left( \sum_{k\in \mathbb{Z%
}}2^{kaq}\Vert f\chi _{\Omega _{k}}\Vert _{{L^{p}}}^{q}\right) ^{\frac{1}{q}%
} \mbox{ and } ~~
\Vert f\Vert _{K_{a,q}^{p}(\mathbb{R}^{N})}=\left( \sum_{k=-1}^{\infty
}2^{kaq}\Vert f\tilde{\chi}_{\Omega _{k}}\Vert _{{L^{p}}}^{q}\right) ^{\frac{%
1}{q}}.
\end{equation*}%
The usual modifications are made if $p$ and/or $q$ is infinite.
\end{definition}

In recent decades, there has been significant progress in the development of these spaces due to their numerous applications. For example, they are considered as good replacements for Hardy spaces when conducting the investigations on boundedness of non-translation invariant singular integral operators \cite{HTH1, HTH2}. Additionally, they are used in the characterization of multipliers on Hardy spaces \cite{Baernstein}, the regularity theory for elliptic equations in divergence form \cite{PDE}, the summability of Fourier transforms \cite{SFT}, and the Navier-Stokes equations \cite{NSE}. Moreover, Herz spaces have recently been used in the study of semilinear parabolic equations \cite{SLPE}.

Herz spaces can also be viewed as amalgam spaces having the
local component as the family $\mathcal{X=}\left( L^{p}\left( \Omega
_{k}\right) \right) _{k\in I}${\ (where }$\left( \Omega _{k}\right) _{k\in I}
${\ is the partition of }$\Omega $) and the global component as the weighted
sequence space $l_{q}\left( w_{a}\right)$  with weight $w_a=w_a\left( k\right) =2^{ak}$, equivalently, $K_{a,q}^{p}(%
\mathbb{R}^{N})=\left( \oplus _{k\in I}L^{p}\left( \Omega _{k}\right)
\right) _{l_{q}\left( w_{a}\right) }$. Moreover, different kinds of amalgam spaces have been considered and used by many authors (see, for example, \cite{Pesa}
for the ideal space approach and some history and references).

The main objective of this paper is to introduce and study amalgam-type spaces called K\"{o}the-Herz spaces $E\left( \mathcal{X}\right) $, {having the local component as the family }$\mathcal{X=}\left(
X\left( \Omega _{\alpha }\right) \right) _{\alpha \in I}$ of quasi-normed ideal spaces {\ (where }$\left(
\Omega _{\alpha }\right) _{\alpha \in I}${\ is the partition of }$\Omega $)
and the global component as the quasi-normed ideal sequence space $E$ over
the counting measure space $\left( I,2^{I},m\right)$. Recall that the theory of (quasi-)normed ideal spaces over some measure space $\left( \Omega ,\Sigma
,\mu \right) $ has been intensively and widely developed (see \cite{Sharpley1988, Kam-Mal-Per-2003, Kam-Mal-Isr, kantor, Kol2018-Posit, KLM-2019, Holder-lorentz, Lin, Lin-Tza-s, LinTza, Ma04, Ma08}). 
 K\"othe-Herz spaces will consolidate most of the above threads (among others) under a single umbrella and provide a clear picture of their similarities.


It is important to recognize that the K\"{o}the-Herz spaces encompass a wide range of classical function spaces found in the literature. However, investigating properties of such spaces can be a complex and parameter-dependent task, often requiring diverse methods. By unifying the essential features of various function spaces, we believe that the K\"{o}the-Herz spaces can simplify the study of these spaces and make the methods more easy and accessible. A glimpse of this will reflect within the paper, as we establish several significant results using short and elementary arguments.

Since we prove that the spaces $E\left( \mathcal{X}\right) $ are quasi-normed ideal spaces, it is pertinent to focus on properties that are fundamental 
from both lattice and function space perspectives. To illustrate the significance of K\"{o}the-Herz spaces, we focus on a specific example of the spaces $E\left( \mathcal{X}\right) $ by taking local component $X=L^{p,r}$- the Lorentz spaces\footnote{\label{fn} Lorentz spaces were introduced by G. G. Lorentz (\cite{Lorentz1, Lorentz2}) as a generalization of Lebesgue spaces, which are insufficient to describe certain properties of functions and operators. They have played a crucial role in analysis, particularly in interpolation theory and Fourier analysis (see \cite{Sharpley1988, Bergh, Caledron, Grafakos}).} and global component $E=l_{q}\left( w_{a}\right) $- the weighted Lebesgue sequence space. We call them the Lorentz-Herz spaces and they generalize the classical Herz spaces. Applying the known results about Lorentz spaces as well as our general results about the K\"{o}the-Herz spaces, we get the description of the basic properties of the Lorentz-Herz spaces. Furthermore, we study a real and complex interpolation
of Lorentz-Herz spaces. Finally, we show the boundedness of important
integral sublinear operators with a kernel.

The K\"othe-Herz spaces are a very general framework for the study of various function spaces. To better understand the significance of this framework, it is important to examine more examples. In this paper, we provide additional examples, including Orlicz-Herz spaces (see Appendix A). However, to keep the paper concise, we study of only the Lorentz-Herz spaces in detail.

The paper is structured as follows. In Section 2, we define the terminology used throughout the paper and provide some necessary background information. In Section 3, we establish the completeness of infinite direct sums, and present a criterion for order continuity. We then introduce K\"{o}the-Herz spaces $E\left(\mathcal{X}\right)$ and prove that they are quasi-normed ideal spaces. Moreover, we characterize their Fatou property, duality, and K\"{o}the duality, and derive the H\"{o}lder inequality as a corollary. We also provide a Banach function space characterization coming from \cite{Sharpley1988}.

In Subsection 3.1, we focus on a particular case of K\"{o}the-Herz spaces, namely the Lorentz-Herz spaces $HL_{a,q}^{p,r}$. By applying the general results obtained earlier, we show that these spaces are quasi-Banach ideal spaces with the Fatou property. We further investigate their order continuity, embeddings, and K\"{o}the duality, and demonstrate their stability under real and complex interpolation. Finally, we establish the boundedness of a class of operators on $HL_{a,q}^{p,r}$ by leveraging corresponding boundedness results on Lorentz spaces.

The paper concludes with a brief appendix (Appendix A) on Orlicz-Herz spaces.


\section{Preliminaries}

The functional $x\mapsto \Vert x\Vert $ on a given vector space $X$ is
called a \textit{quasi-norm} if the following three conditions are
satisfied: $\Vert x\Vert =0$ iff $x=0$; $\Vert ax\Vert =|a|\,\Vert x\Vert
,x\in X,a\in \mathbb{R}$ 
and there exists $C\geq 1$ such that $\Vert x+y\Vert \leq C(\Vert x\Vert +\Vert
y\Vert )$ for all $x,y\in X$. \ The smallest possible constant $C$ for which
the above inequality holds is called modulus of concavity and it is usually denoted by $C_{X}.$ A quasi-norm $\Vert \cdot \Vert $  is called a \textit{$p$-norm} (where $0<p\leq 1$) if, in addition, it is $p$-subadditive, that is, $\Vert x+y\Vert^{p}\leq \Vert x\Vert^{p}+\Vert y\Vert^{p}$ for all $x,y \in X$. Recall the \textit{Aoki--Rolewicz theorem} (cf. \cite[%
Theorem 1.3 on p. 7]{KPR84}, \cite[p. 86]{Ma04}, \cite[pp. 6--8]{Ma08}): if $\|\cdot\|$ is a quasi-norm on $X$, then there exists an equivalent \textit{$p$-norm} $\vertiii{\cdot}$ on $X$. More precisely, the equivalent \textit{$p$-norm} is given by 
\begin{equation*}
\vertiii{x}=\inf \left\{\left(\sum_{k=1}^{n}\Vert x_{k}\Vert_X ^{p} \right)^{1/p}\colon x=\sum_{k=1}^{n} x_{k}\right\},
\end{equation*}%
where the infimum is taken over all finite sequences $\{x_k\} \subset X$. The quasi-norm $\Vert \cdot \Vert $ induces
a metric topology on $X$. In fact, the metric can be defined by $d(x,y)=\vertiii
{x-y}^{p}$. We say that $X=(X,\Vert \cdot \Vert )$ is a \textit{%
quasi-Banach space} if it is complete for the aforementioned metric.
%
%
%
%

A triple $(\Omega ,\Sigma ,\mu )$ stands for a positive, complete and $%
\sigma $-finite measure space. Let $L^{0}=L^{0}(\Omega ,\Sigma ,\mu )$ be
the space of all (equivalence classes of) $\Sigma $-measurable functions $%
f:\Omega \rightarrow \mathbb{R}$. For every $f\in L^{0}$, we denote $supp(
f)=\overline{\{x\in \Omega :f(x)\neq 0\}}$. For any $f,g\in L^{0}$, we write $f\leq g$,
if $f(x)\leq g(x)$ almost everywhere with respect to the measure $\mu $ on
the set $\Omega $.
Set $L_{+}^{0}= \{f\in L^{0}:f\geq 0 \}$.
\begin{definition}
\label{ideal-space}A quasi-normed lattice [quasi-Banach lattice] $E=(E,\leq
,\Vert \cdot \Vert _{E})$ is called a \textit{quasi-normed ideal space} [%
\textit{quasi-Banach ideal space} or a \textit{quasi-K\"{o}the space}] if it
is a linear subspace of $L^{0}$ satisfying the following conditions:

\begin{itemize}
\item[(i)] If $f\in L^{0}$, $g\in E$ and $|f|\leq |g|$ $\mu $-a.e., then $%
f\in E$ and $\Vert f\Vert _{E}\leq \Vert g\Vert _{E}$.

\item[(ii)] There exists $f\in E$ which is strictly positive on the whole $%
\Omega $ (cf. \cite{LinTza}).
\end{itemize}

If additionally $\Vert \cdot \Vert _{E}$ is a norm, then $E$ is called
normed-\textit{ideal space [Banach ideal space].}
\end{definition}

For two ideal (quasi-)normed spaces $E$ and $F$ on $\Omega$ the symbol $E\overset{C}{%
\hookrightarrow }F$ means that the inclusion $E\subset F$ is continuous with
a norm which is not bigger than C, i.e., $\| x\| _{F} \leq C \| x\|_{E}$ for
all $x \in E$. In the case when the embedding $E\overset{C}{\hookrightarrow }%
F$ holds with some (unknown) constant $C>0$ we simply write $%
E\hookrightarrow F$. Moreover, $E = F$ (and $E\equiv F$) means that the
spaces are the same and the norms are equivalent (equal).

By a \textit{symmetric space} on $\Omega$ we mean a (quasi-)normed ideal
space $E=(E,\Vert \cdot \Vert _{E})$ with the additional property that for
any two equimeasurable functions $f\sim g,~f,g\in L^{0}(\Omega)$ (that is,
they have the same distribution functions $d_{f}=d_{g}$, where $%
d_{f}(\lambda )=\mu(\{x\in \Omega \colon |f(x)|>\lambda \}),\lambda \geq 0$)
and $f\in E$, we have $g\in E $ and $\Vert f\Vert _{E}=\Vert g\Vert _{E}$%
. In particular, $\Vert f\Vert _{E}=\Vert f^{\ast }\Vert _{E}$, where $%
f^{\ast }(t)=\mathrm{inf}\{\lambda >0\colon \ d_{f}(\lambda )\leq t\},\
t\geq 0$ is the decreasing (or non-increasing) rearrangement of $f$. The most common examples of symmetric spaces are provided by Orlicz spaces, Lorentz spaces and Marcinkiewicz spaces.

Recall that a quasi-normed ideal space $E$ has the \textit{Fatou property
(briefly, }$E\in \left( FP\right) $), if for any $f\in L^{0}$ and any $%
\left( f_{n}\right) _{n=1}^{\infty }$ in $E_{+}$ such that $f_{n}\uparrow
|f| $ $\mu $-a.e and $\sup_{n\in \mathbb{N}}\Vert f_{n}\Vert _{E}<\infty $, we get $%
f\in E$ and $\lim_{n}\Vert f_{n}\Vert _{E}=\Vert f\Vert _{E}$. If $E$ is a normed ideal space, then $E\in \left( FP\right) $ iff $E=(E')'$, where $E'$ is a K\"othe dual of $E$ (see \cite[p. 30]{LinTza}). 

Furthermore, a quasi-normed lattice $E$ is called \textit{order continuous} (%
$E\in (\mathrm{OC})$) if for each sequence $f_{n}\downarrow 0$, that is $%
f_{n}\geq f_{n+1}$ and $\inf_{n}f_{n}=0$, we have $\Vert f_{n}\Vert
_{E}\rightarrow 0$ (see \cite{kantor, LinTza, Wnuk}).
Moreover, $E\in \left( \mathrm{OC}\right) $ if and only if for every element 
$f\in E$ and each sequence $\left( f_{n}\right) $ in $E$ satisfying
conditions $\inf \left\{ f_{n},f_{m}\right\} =0$ for $n\neq m$ and $%
0\leq f_{n}\leq \left\vert f\right\vert $ we have $\left\Vert
f_{n}\right\Vert _{E}\rightarrow 0$ (see Theorem 2.1 in \cite{Kol2018-Posit}%
). If $E$ is a normed ideal space, then $E\in \left( OC\right) $ iff $E^{*}=E'$, where $E^{*}$ is a topological dual of $E$ (see \cite[p. 29]{LinTza}). 

In the paper we usually consider $\mathcal{X}=\left( X_{\alpha }\right) _{\alpha \in I}$
as the family of non-trivial quasi-normed ideal spaces. But sometimes we can
formulate our statements in more general form. Below we recall the notion of infinite direct sum.

\begin{definition}

Let $I$ be any countable set of indices and $E$ be a quasi-normed ideal
space over counting measure space $\left( I,2^{I},m\right) $. Given a family 
$\mathcal{X}=\left( X_{\alpha }\right) _{\alpha \in I}$ of non-trivial
quasi-normed spaces, the infinite direct sum $ \left( \oplus _{\alpha \in I}X_{\alpha}\right)
_{E} $ is defined by

\begin{equation*}
\left( \oplus _{\alpha \in I}X_{\alpha}\right)_{E} =\left\{ f=\left( f_{\alpha }\right) _{\alpha}:f_{\alpha }\in X_{\alpha }\text{ for each }\alpha \in I\text{ and }\left(
\left\Vert f_{\alpha }\right\Vert _{X_{\alpha }}\right) _{\alpha }\in
E\right\} .
\end{equation*}
\end{definition}

We equip this space with the functional $$\left\Vert
f\right\Vert _{\left( \oplus _{\alpha \in I}X_{\alpha}\right)
_{E} }=\left\Vert \left( \left\Vert
f_{\alpha }\right\Vert _{X_{\alpha }}\right) _{\alpha }\right\Vert _{E}.$$  
For simplicity, we will write 
\begin{equation*}
\mathcal{E}\left( \mathcal{X}\right) =\left( \oplus _{\alpha \in I}X_{\alpha}\right)
_{E}.
\end{equation*}

It is to be noted that the space $\mathcal{E}\left( \mathcal{X}\right) $ for a normed
case but also in more general contexts have been considered in many papers,
see for example \cite{Ku-Lan, lau, Le-Piazza}. Further, if $X_{\alpha}=X$ for each $\alpha$, then $\mathcal{E}\left( \mathcal{X}\right) $ is the classical K\"{o}the-Bochner sequence
space (see \cite{Lin}). For any $\alpha \in I$, let $e_{\alpha}=1_{\{\alpha%
\}}$ be the indicator function of the singleton set $\{\alpha\}$.

Denote by $C_{\alpha }$, the constant from quasi-triangle inequality for $X_{\alpha } $ and set
\begin{equation}
C=\sup_{\alpha }C_{\alpha }.
\end{equation}
The following facts in this section are trivial and well known but we present a proof of the first for reader's convenience.
\begin{proposition}
\label{quasi-normed} Let  $\mathcal{E}\left( \mathcal{X}\right) $ and $C$ be as above. Assume that $C<\infty $ and $D$ is the modulus of concavity for $E$.  Then

\begin{itemize}
\item[(i)] $\mathcal{E}\left( \mathcal{X}\right) $ is a linear space and the
functional $\| \cdot \| _{\mathcal{E}\left( \mathcal{X}\right) }$ is
a quasi-norm on $\mathcal{E}\left( \mathcal{X}\right) .$ If additionally $E$ and each $%
X_{\alpha }$ are normed spaces then the functional $\left\Vert \cdot
\right\Vert _{\mathcal{E}\left( \mathcal{X}\right) }$ is a norm on $\mathcal{E}\left( \mathcal{X}%
\right) .$

\item[(ii)] If $\mathcal{X}=\left( X_{\alpha }\right) _{\alpha \in I} $ is a
family of non-trivial quasi-normed lattices, then $( \mathcal{E}\left( \mathcal{X}\right) ,\| \cdot \|_{\mathcal{E}\left( \mathcal{X}\right) },\leq) $ is a quasi-normed lattice with the partial order defined by%
\begin{equation*}
f\leq g\text{ in } \mathcal{E}\left( \mathcal{X}\right) \text{ if and only if }~\forall ~\alpha ,\ f_{\alpha
}\leq g_{\alpha }\text{ in }X_{\alpha }.
\end{equation*}
\end{itemize}
\end{proposition}

\begin{proof}
It is enough to show that the functional $\| \cdot \|
_{\mathcal{E}\left( \mathcal{X}\right) }$ satisfies the quasi-triangle inequality. Let 
$f,g\in \mathcal{E}\left( \mathcal{X}\right) $, then $f=\left( f_{\alpha }\right)
_{\alpha }$ and $g=\left( g_{\alpha }\right) _{\alpha }$. Since each $%
X_{\alpha }$ is a quasi-normed space, therefore $\Vert f_{\alpha }+g_{\alpha
}\Vert _{X_{\alpha }}\leq C_{\alpha }\left( \Vert f_{\alpha }\Vert
_{X_{\alpha }}+\Vert g_{\alpha }\Vert _{X_{\alpha }}\right) $. Thus 
\begin{eqnarray*}
\Vert f+g\Vert _{\mathcal{E}(\mathcal{X})} &=&\left\Vert \left( \Vert f_{\alpha
}+g_{\alpha }\Vert _{X_{\alpha }}\right) _{\alpha }\right\Vert _{E} 
\leq \left\Vert \left( C_{\alpha }(\Vert f_{\alpha }\Vert _{X_{\alpha
}}+\Vert g_{\alpha }\Vert _{X_{\alpha }})\right) _{\alpha }\right\Vert _{E}
\\
&\leq &C\left\Vert \left( \Vert f_{\alpha }\Vert _{X_{\alpha }}\right)
_{\alpha }+\left( \Vert g_{\alpha }\Vert _{X_{\alpha }}\right) _{\alpha
}\right\Vert _{E} \\
&\leq &CD\left( \left\Vert \left( \Vert f_{\alpha }\Vert _{X_{\alpha
}}\right) _{\alpha }\right\Vert _{E}+\left\Vert \left( \Vert g_{\alpha
}\Vert _{X_{\alpha }}\right) _{\alpha }\right\Vert _{E}\right) \\
& \leq & CD\left( \Vert f\Vert _{\mathcal{E}(\mathcal{X})}+\Vert g\Vert _{\mathcal{E}(\mathcal{X}%
)}\right) .
\end{eqnarray*}%
\end{proof}

The example given below shows that the condition $\sup_{\alpha }C_{\alpha
}<\infty $ cannot be dropped.

\begin{example}

Let $A_{n}=[n,n+1),~n\in \mathbb{N}$ and $X_{n}=L^{p_{n}}[n,n+1)$ with the $%
p_{n}=1/n$ be equipped with quasi-norm 
\begin{equation*}
\Vert f\Vert _{X_{n}}=\left[ \int_{A_{n}}|f\left( t\right) |^{p_{n}}dt\right]
^{1/p_{n}}.
\end{equation*}%
Here $C_{n}=2^{n-1}$, thus $\sup_{n}C_{n}=+\infty $. Now let $E=\ell
^{q},1\leq q<\infty $ and choose $f,g\in \mathcal{E}(\mathcal{X})$ such that $%
f=(f_{n})_{n}=\left( \chi _{\lbrack n+\frac{1}{2},n+1 \rbrack} \right) _{n}$ and $%
g=(g_{n})_{n}=\left( \chi _{\lbrack n,n+\frac{1}{2})}\right) _{n}$. Then $%
\Vert f\Vert _{\mathcal{E}(\mathcal{X})},~\Vert g\Vert _{\mathcal{E}(\mathcal{%
X})}<\infty $, but $\Vert f+g\Vert _{\mathcal{E}(\mathcal{X})}=\infty $.
\end{example}

\begin{lemma}

\label{Isometries} $\left( i\right) $ The sequence space $E$ is
isometrically embedded in $\mathcal{E}(\mathcal{X})$.\newline
$\left( ii\right) $ For any $\alpha _{0}\in I$, $X_{\alpha _{0}}$ is
isometrically embedded in $\mathcal{E}(\mathcal{X})$.
\end{lemma}

\begin{proof} The proof is an easy exercise.
%
\end{proof}

\section{K\"{o}the-Herz spaces}
We begin this section by proving a few results that are valid for a more general infinite direct sum construction than the K\"othe-Herz spaces. Due to the facts discussed in the previous section, throughout the article we assume that
\begin{equation}
C=\sup_{\alpha }C_{\alpha } < \infty,
\end{equation}
where $C_{\alpha }$ is the constant from quasi-triangle inequality for $X_{\alpha } $. Note also that for the classical Herz spaces and for the Lorentz-Herz spaces (Section 3.1) this condition gets naturally satisfied.
The following theorem gives completeness of infinite direct sum $\mathcal{E}(\mathcal{X})$ by using the
completeness of individual components.

\begin{theorem}
\label{complete}Assume that $\mathcal{X}=\left( X_{\alpha }\right) _{\alpha
\in I}$ is a family of quasi-Banach spaces and $E$ is a quasi-Banach ideal
space over counting measure space $\left( I,2^{I},m\right) $. Then, the
space $\mathcal{E}(\mathcal{X})$ is complete.
\end{theorem} 

\begin{proof}
Suppose $f_{n}=\left( f_{\alpha }^{n}\right) _{\alpha },$ is a
sequence in $\mathcal{E}(\mathcal{X})$ such that 
\begin{equation*}
\sum\limits_{n=1}^{\infty }(CD)^{n}\Vert f_{n}\Vert _{\mathcal{E}(\mathcal{X})}<\infty,
\end{equation*}%
where $C$ and $D$ are as in Proposition \ref{quasi-normed}. Then given any $\beta \in I$, $\Vert f_{\beta }^{n}\Vert _{X_{\beta
}}e_{\beta }\leq \left( \Vert f_{\alpha }^{n}\Vert _{X_{\alpha }}\right)
_{\alpha }$ coordinate-wise for any $n\in \mathbb{N}$. Therefore, by ideal property of $E$,
we get $\Vert f_{\beta }^{n}\Vert _{X_{\beta }}\Vert e_{\beta }\Vert
_{E}\leq \Vert f_{n}\Vert _{\mathcal{E}(\mathcal{X})}$ for all $n\in \mathbb{N}$ and $%
\beta \in I$. Thus 
\begin{equation*}
\sum\limits_{n=1}^{\infty }(C_{\beta })^{n}\Vert f_{\beta }^{n}\Vert
_{X_{\beta }}\leq \frac{1}{\Vert e_{\beta }\Vert _{E}}\sum\limits_{n=1}^{%
\infty }(CD)^{n}\Vert f_{n}\Vert _{\mathcal{E}(\mathcal{X})}<\infty .
\end{equation*}%
Since $X_{\beta }$ is complete, by Theorem 1.1 from \cite{Ma04}, we have 
\begin{equation}
\sum\limits_{n=1}^{\infty }f_{\beta }^{n}\in X_{\beta }~\text{and}%
~\left\Vert \sum\limits_{n=1}^{\infty }f_{\beta }^{n}\right\Vert _{X_{\beta
}}\leq C_{\beta }\sum\limits_{n=1}^{\infty }C_{\beta }^{n}\Vert f_{\beta
}^{n}\Vert _{X_{\beta }}.  \label{eqn1}
\end{equation}%
Also, 
\begin{equation*}
\sum\limits_{n=1}^{\infty }(CD)^{n}\Vert f_{n}\Vert _{E(\mathcal{X}%
)}=\sum\limits_{n=1}^{\infty }D^{n}\left\Vert \left( \Vert C^{n}f_{\alpha
}^{n}\Vert _{X_{\alpha }}\right) _{\alpha }\right\Vert _{E}<\infty .
\end{equation*}%
As $E$ is complete, we have 
\begin{equation*}
\sum\limits_{n=1}^{\infty }\left( \Vert C^{n}f_{\alpha }^{n}\Vert
_{X_{\alpha }}\right) _{\alpha }\in E~\text{and}~\left\Vert
\sum\limits_{n=1}^{\infty }\left( \Vert C^{n}f_{\alpha }^{n}\Vert
_{X_{\alpha }}\right) _{\alpha }\right\Vert _{E}\leq
D\sum\limits_{n=1}^{\infty }D^{n}\left\Vert \left( \Vert C^{n}f_{\alpha
}^{n}\Vert _{X_{\alpha }}\right) _{\alpha }\right\Vert _{E}.
\end{equation*}%
Equivalently, 
\begin{equation}
\left( \sum\limits_{n=1}^{\infty }C^{n}\Vert f_{\alpha }^{n}\Vert
_{X_{\alpha }}\right) _{\alpha }\in E~\text{and}~\left\Vert \left(
\sum\limits_{n = 1}^{\infty} C^{n}\Vert f_{\alpha }^{n}\Vert _{X_{\alpha
}}\right) _{\alpha }\right\Vert _{E}\leq D\sum\limits_{n=1}^{\infty
}(CD)^{n}\left\Vert f_{n}\right\Vert _{\mathcal{E}(\mathcal{X})}.  \label{eqn2}
\end{equation}%
Since condition (\ref{eqn1}) is true for any $\beta \in I$, we obtain 
\begin{equation*}
\left( \left\Vert \sum\limits_{n=1}^{\infty }f_{\alpha }^{n}\right\Vert
_{X_{\alpha }}\right) _{\alpha }\leq C\left( \sum\limits_{n=1}^{\infty
}C^{n}\Vert f_{\alpha }^{n}\Vert _{X_{\alpha }}\right) _{\alpha }.
\end{equation*}%
It follows from equation (\ref{eqn2}) and ideal property of $E$ that 
\begin{equation}
\left( \left\Vert \sum\limits_{n=1}^{\infty }f_{\alpha }^{n}\right\Vert
_{X_{\alpha }}\right) _{\alpha }\in E~\text{and}~\left\Vert \left(
\left\Vert \sum\limits_{n=1}^{\infty }f_{\alpha }^{n}\right\Vert _{X_{\alpha
}}\right) _{\alpha }\right\Vert _{E}\leq C\left\Vert \left(
\sum\limits_{n=1}^{\infty }C^{n}\Vert f_{\alpha }^{n}\Vert _{X_{\alpha
}}\right) _{\alpha }\right\Vert _{E}.  \label{eqn3}
\end{equation}%
Conditions (\ref{eqn1}), (\ref{eqn2}) and (\ref{eqn3}) together imply 
\begin{equation*}
\sum\limits_{n=1}^{\infty }f_{n}\in \mathcal{E}(\mathcal{X})~\text{and}~\left\Vert
\sum\limits_{n=1}^{\infty }f_{n}\right\Vert _{\mathcal{E}(\mathcal{X})}\leq
CD\sum\limits_{n=1}^{\infty }(CD)^{n}\Vert f_{n}\Vert _{\mathcal{E}(\mathcal{X})}.
\end{equation*}%
Applying again Theorem 1.1 from \cite{Ma04}, we conclude that $\mathcal{E}(\mathcal{X})
$ is complete.

\end{proof}

\begin{theorem}
\label{oc}Let $\mathcal{X}=\left( X_{\alpha }\right) _{\alpha \in I}$ be a
family of quasi-normed lattices and $E$ be a quasi-normed ideal space over
counting measure space $\left( I,2^{I},m\right) .$ 
Then $\mathcal{E}\left( \mathcal{X}%
\right) \in \left( OC\right) $ if and only if $E\in \left( OC\right) $ and $%
X_{\alpha }\in \left( OC\right) $ for every $\alpha \in I.$
\end{theorem}

\begin{proof}

Suppose $E$ and $X_{\alpha }$ are order continuous for each $\alpha \in I$.
For any sequence $f_{n}=(f_{\alpha }^{n})_{\alpha }\in E(\mathcal{X}%
)$ with $f_{n}\downarrow 0$, 
 we have $\Vert
f_{\alpha }^{n}\Vert _{X_{\alpha }}\downarrow 0~$for every$~\alpha $.
Denoting $\widetilde{f_{n}}=\left( \Vert f_{\alpha }^{n}\Vert _{X_{\alpha
}}\right) _{\alpha }$,  we conclude that the sequence $\widetilde{f_{n}}$ is
pointwisely decreasing to zero in $E$.  Therefore, by $E\in (OC)$, we get $%
\left\Vert f_{n}\right\Vert _{\mathcal{E}\left( \mathcal{X}\right) }=\left\Vert 
\widetilde{f_{n}}\right\Vert _{E}\rightarrow 0$, i.e. $\mathcal{E}(\mathcal{X})\in (OC)
$. The converse is an outcome of the Lemma \ref{Isometries}. 
\end{proof}

Now we consider a particular case of the construction $%
\mathcal{E}\left( \mathcal{X}\right)$. Suppose $\mathcal{X}=\left( X_{\alpha }\right)
_{\alpha \in I}$ denotes a family of non-trivial quasi-normed ideal spaces
over the measure spaces $\left( \Omega _{\alpha },\Sigma _{\alpha },\mu _{\alpha }\right) $ chosen in a way described below$.$

\begin{definition} \label{Kothe-Herz} [\textbf{K\"{o}the-Herz spaces}]
Let $\left( \Omega _{\alpha }\right) _{\alpha \in I}$ be a measurable partition of $%
\Omega $, that is, $\Omega _{\alpha }\in \Sigma $ for $\alpha \in I$, $%
\Omega =\bigcup_{\alpha \in I}\Omega _{\alpha }~ \mu-a.e.$ and $\mu(\Omega _{\alpha }\cap
\Omega _{\beta })=0 $ for $\alpha ,\beta \in I,\alpha \neq \beta .$
Denote by $\Sigma _{\alpha }=\left\{ \Omega _{\alpha }\cap B : B\in \Sigma
\right\} $ and $\mu _{\alpha }:=\mu \big|_{\Sigma _{\alpha }} $. Let $X_{\alpha }$ be a
quasi-normed ideal space over measure space $\left( \Omega _{\alpha },\Sigma
_{\alpha },\mu _{\alpha }\right) ,\alpha \in I$ and $E$ be a quasi-normed ideal
space over counting measure space $\left( I,2^{I},m\right) $. Define 
\begin{equation*}
E\left( \mathcal{X}\right) =\left\{ f\in L^{0}(\Omega ,\Sigma ,\mu ):\ f\chi
_{\Omega _{\alpha }}\in X_{\alpha }\text{ for each }\alpha \in I\text{ and }%
\left( \left\Vert f\chi _{\Omega _{\alpha }}\right\Vert _{X_{\alpha
}}\right) _{\alpha }\in E\right\} 
\end{equation*}%
with the functional 
\begin{equation*}
\left\Vert f\right\Vert _{E\left( \mathcal{X}\right) }=\left\Vert \left(
\left\Vert f\chi _{\Omega _{\alpha }}\right\Vert _{X_{\alpha }}\right)
_{\alpha }\right\Vert _{E}.
\end{equation*}%
We will call $\left( E\left( \mathcal{X}
\right) ,\| \cdot \| _{E\left( \mathcal{X}\right) }\right) $ the \textit{K\"{o}the-Herz spaces}.
\end{definition}
  Clearly, the  K\"{o}the-Herz space  $E\left( \mathcal{X}\right)$ and the appropriate infinite direct sum $ \left( \oplus _{\alpha \in I}X_{\alpha}\right)
_{E} $ are linearly isometric. Thus we will identify these two objects.

In the following example, we assume that $a\in \mathbb{R},~$ and let $I=\mathbb{N}\cup \left\{ -1,0\right\} $ and $\left( \Omega _{k }\right) _{k \in I}$ be a partition of $\mathbb{R}^N $ given by
$\Omega _{-1}=\left\{ \omega \in \mathbb{R}^{N}:\left\vert \omega \right\vert
<1/2\right\} $ and
$\Omega _{k}=\left\{ \omega \in \Omega :2^{k-1}\leq \left\vert \omega
\right\vert <2^{k}\right\},$ $k \in \mathbb{N} \cup \{0\}$.
\begin{example} \label{example1} [\textbf{Herz spaces}] Taking 
$E=l_{q}\left( w_{a}\right) $ over $I$ with $w_{a}=w_{a}(k)=2^{ka}$ and $%
X_{k}=L^{p}\left( \Omega _{k}\right) $ for all $k\in I$ we get 
\begin{equation*}
E\left( \mathcal{X}\right) = K_{a,q}^{p}(\mathbb{R}^{N}).
\end{equation*}%
Of course, the spaces $K_{a,q}^{p}(\mathbb{R}^{N})$  and  $\left( \oplus _{k\in I}L^{p}\left( \Omega_{k}\right) \right) _{l_{q}\left( w_{a}\right) }$ are linearly isometric.
\end{example}
\begin{example} \label{example2} [\textbf{Other examples of Herz-type spaces}] 
Clearly, if we consider the family $\mathcal{X}=\left( X_{\alpha }\right)
_{\alpha \in I}$ of Orlicz, Lorentz or Marcinkiewicz spaces we can get the respective classes of Herz-type spaces (see the Lorentz-Herz spaces below). Also, we can fix in our construction $\mathcal{X}=\left( X_{\alpha }\right)
_{\alpha \in I}$ as the family of the Grand Lebesgue (Small Lebesgue) spaces which are still rearrangement invariant. Of course, we can go to the non-symmetric case taking the Musielak-Orlicz spaces (or as a particular case - Variable Lebesgue spaces, see \cite{NRZ}). In all cases above, the respective spaces $\left( E\left( \mathcal{X}\right)
,\| \cdot \| _{E\left( \mathcal{X}\right) }\right) $ have the ideal property (see the next proposition).

On the other hand, many authors consider the spaces of Herz type which are no longer ideal spaces (or even are not lattices). For example, D. Drihem studies the construction which, in our language, is just  $\left( \oplus _{k\in I}W^{p,m}\left( \Omega_{k}\right) \right) _{l_{q}\left( w_{a}\right) }$, where $W^{p,m}$ is the Sobolev space (see \cite[Def. 3.1]{Dr2}.
\end{example}
 Since there is a close resemblance between $E\left( \mathcal{X} \right)$ and the K\"othe-Bochner spaces, and our construction covers the classical Herz spaces, we refer to our construction as the K\"othe-Herz spaces. As Proposition \ref{quasi-normed} demonstrates, the K\"{o}the-Herz space $\left( E\left( \mathcal{X}%
\right) ,\| \cdot \| _{E\left( \mathcal{X}\right) }\right) $
is a quasi-normed lattice. However, the K\"{o}the-Herz spaces possess additional and useful structure - they are ideal spaces.

\begin{proposition}
\label{ideal} The K\"{o}the-Herz space $\left( E\left( \mathcal{X}\right)
,\| \cdot \| _{E\left( \mathcal{X}\right) }\right) $ is a
quasi-normed ideal space over $(\Omega,\Sigma ,\mu ).$
\end{proposition}

\begin{proof}
In view of Proposition \ref{quasi-normed}, we only need to show that the space $\left( E\left( \mathcal{X}\right) ,\| \cdot \| _{E\left( \mathcal{X}\right) }\right) $ has ideal property. Let $f,g \in L^0$ such that $|f| \leq |g|~\mu-a.e.$ and $g \in E\left(\mathcal{X}\right)$. Then, for any $\alpha \in I$, $|f|\chi _{\Omega _{\alpha}} \leq |g|\chi _{\Omega _{\alpha}}~\mu-a.e.$ and $g\chi_{\Omega_{\alpha}} \in X_{\alpha}$. Therefore, by ideal property of $X_{\alpha}$, $f\chi_{\Omega_{\alpha}} \in X_{\alpha}$ and $\left\Vert f\chi_{\Omega _{\alpha}}\right\Vert _{X_{\alpha}} \leq \left\Vert g\chi_{\Omega _{\alpha}}\right\Vert _{X_{\alpha}}$. Also, since $\left( \left\Vert f\chi_{\Omega _{\alpha}}\right\Vert _{X_{\alpha}}\right) _{\alpha} \leq \left( \left\Vert g\chi_{\Omega _{\alpha}}\right\Vert _{X_{\alpha}}\right) _{\alpha}$ and $\left( \left\Vert g\chi_{\Omega _{\alpha}}\right\Vert _{X_{\alpha}}\right) _{\alpha} \in E$, by ideal property of $E$, we conclude that $\left( \left\Vert f\chi_{\Omega _{\alpha}}\right\Vert _{X_{\alpha}}\right) _{\alpha} \in E$ and $\left\Vert \left(\left\Vert f\chi _{\Omega _{\alpha}}\right\Vert _{X_{\alpha}}\right) _{\alpha}\right\Vert_{E} \leq \left\Vert \left(\left\Vert g\chi _{\Omega _{\alpha}}\right\Vert_{X_{\alpha}}\right) _{\alpha}\right\Vert_{E}$ i.e. $f \in E\left(\mathcal{X} \right)$ and
$\left\Vert f \right\Vert _{E\left( \mathcal{X}\right) }  \leq \left\Vert g \right\Vert _{E\left( \mathcal{X}\right) } $.
\end{proof}

Applying Theorem \ref{complete} we immediately conclude
\begin{corollary}
 \label{Herz-complete}Assume that $\mathcal{X}=\left( X_{\alpha }\right) _{\alpha
\in I}$ is a family of quasi-Banach ideal spaces and $E$ is a quasi-Banach ideal
space over counting measure space $\left( I,2^{I},m\right) $. Then, the K\"{o}the-Herz
space ${E}(\mathcal{X})$ is complete.   
\end{corollary}

For an ideal space $E$ we denote by $E_{s}$ the set of simple functions in $E$ having the finite measure. Applying Lemma 3 from \cite[p. 98]{kantor} (which is still true for quasi-normed space with the same proof) and Theorem \ref{oc}, we get

\begin{lemma}\label{dens} [\textbf{Density of the set of simple functions}] 
    Suppose $\mathcal{X}=\left( X_{\alpha }\right) _{\alpha
\in I}$ is a family of quasi-normed ideal spaces and $E$ is a quasi-normed ideal
space over counting measure space $\left( I,2^{I},m\right) $. If $E\in \left( OC\right) $ and $X_{\alpha }\in \left( OC\right) $ for every $\alpha \in I$, then the set $(E(\mathcal{X}))_{s}$ is dense in $(E(\mathcal{X}))$.

\end{lemma}

Now we apply Theorem 3 from \cite[p. 98]{kantor}. A careful reading of the proof details of of the all necessary facts in \cite{kantor} shows that it is still true for a quasi-Banach ideal space (see for example Lemma 4.1 in \cite{Fo-Hu-Ko} which is a generalization of Theorem 1 in \cite[p. 96]{kantor}, also we apply Proposition 2.2 in \cite{lee}). In consequence, using also Theorem \ref{oc}, we obtain

\begin{lemma}\label{sep} [\textbf{Separability}] 
   Suppose $\mathcal{X}=\left( X_{\alpha }\right) _{\alpha
\in I}$ is a family of quasi-Banach ideal spaces and $E$ is a quasi-Banach ideal
space over counting measure space $\left( I,2^{I},m\right) $. Then the K\"{o}the-Herz
space ${E}(\mathcal{X})$ is separable if and only if $\mu$ is separable, $E\in \left( OC\right) $ and $X_{\alpha }\in \left( OC\right) $ for every $\alpha \in I$.
\end{lemma}

\begin{theorem} \label{FP} [\textbf{Fatou property}]
 The K\"{o}the-Herz space $E\left( \mathcal{X}\right) $ has the
Fatou property if and only if both $E$ and $X_{\alpha }$ ($\alpha \in I$%
) have the Fatou property.
\end{theorem}

\begin{proof}
In the light of Lemma \ref{Isometries}, we only need to prove the sufficiency. Suppose that each $X_{\alpha}, ~\alpha \in I$ and $E$ have the Fatou property. Let $f_n \in E\left(\mathcal{X} \right)$ and $f \in L^0$ such that $f_n \uparrow f$ and $\sup_n\left\Vert f_n \right\Vert _{E\left( \mathcal{X}\right) }< \infty$. Then $\forall~\alpha \in I$, $f_n \chi_{\Omega_{\alpha}} \in X_{\alpha}$, $f_n \chi_{\Omega_{\alpha}} \uparrow f \chi_{\Omega_{\alpha}} \in L^0$ and $\sup_n \|f_n \chi_{\Omega_{\alpha}}\|_{X_{\alpha}}< \infty$. Using Fatou's property of $X_{\alpha}$, we have $f \chi_{\Omega_{\alpha}} \in X_{\alpha}$ and  
$\|f_n \chi_{\Omega_{\alpha}}\|_{X_{\alpha}} \uparrow \|f \chi_{\Omega_{\alpha}}\|_{X_{\alpha}}$. Set $\tilde{f_n}=\left(\|f_n \chi_{\Omega_{\alpha}}\|_{X_{\alpha}} \right)_{\alpha}$ and $\tilde{f}=\left(\|f \chi_{\Omega_{\alpha}}\|_{X_{\alpha}} \right)_{\alpha}$, then $\tilde{f_n} \uparrow \tilde{f}$ pointwise, $\tilde{f_n} \in E$ and $\sup_n\|\tilde{f_n}\|_E < \infty$. Therefore, by Fatou's property of $E$, $\tilde{f} \in E$ and $\|\tilde{f_n}\|_E \uparrow \|\tilde{f}\|_E$. Equivalently, $f \in E\left( \mathcal{X}\right)$ and $\left\Vert f_n \right\Vert _{E\left( \mathcal{X}\right) }  \uparrow \left\Vert f \right\Vert _{E\left( \mathcal{X}\right) } $.
\end{proof}

It is known that a quasi-normed ideal space with the Fatou property is
complete (see \cite[Lemma 2.1]{Kam-Mal-Per-2003}, cf. \cite[Lemma 4.2]{Fo-Hu-Ko}). Consequently, we get a
way of verification for the completeness (cf. Theorem \ref{complete}).

\begin{corollary}
\label{KH-complete} If $E\in \left( FP\right) $ and $X_{\alpha }\in \left(
FP\right) $ for every $\alpha \in I,$ then the K\"{o}the-Herz space $\left(
E\left( \mathcal{X}\right) ,\| \cdot \| _{E\left( \mathcal{X%
}\right) }\right) $ is a quasi-Banach ideal space.
\end{corollary}

\begin{lemma}
\label{inlusions} Let $E$ and $F$ be two quasi-normed ideal spaces over
counting measure space $\left( I,2^{I},m\right) $. Assume that $\mathcal{X}%
=\left( X_{\alpha }\right) _{\alpha \in I}$ and $\mathcal{Y}=\left(
Y_{\alpha }\right) _{\alpha \in I}$ are suitable families of quasi-Banach
spaces over measure space $\left( \Omega _{\alpha },\Sigma _{\alpha },\mu
_{\alpha }\right) ,\alpha \in I$ as in Definition \ref{Kothe-Herz}. The
following implications hold:

\begin{enumerate}
\item If $E\hookrightarrow F$ then $E(\mathcal{X})\hookrightarrow F(\mathcal{%
X})$.

\item If $X_{\alpha }\overset{K_{\alpha}}{\hookrightarrow} Y_{\alpha }~\forall ~\alpha \in I$ 
 and $\sup_{\alpha }K_{\alpha }=K<\infty $, then $E(%
\mathcal{X})\overset{K}{\hookrightarrow} E(\mathcal{Y})$.
\end{enumerate}
\end{lemma}

The proof this lemma is immediate and can be omitted for brevity. Recall only that for quasi-Banach ideal spaces the
inclusion is always continuous, which follows from the closed graph theorem
which is still true for quasi-Banach spaces (see \cite{KPR84}).

The fundamental tool in theory of ideal spaces is the H\"{o}lder
inequality which needs the notion of the K\"{o}the dual (see \cite{Lin}, \cite{LinTza} for definition and details).

\begin{theorem} \label{dualKHS} [\textbf{The dual of infinite direct sums $E\left( \mathcal{X%
}\right) $}]
Let $\mathcal{X}=\left( X_{\alpha }\right) _{\alpha \in I}$ be a family of
Banach spaces and $E$ be a Banach ideal space over counting measure space $%
\left( I,2^{I},m\right) $. Suppose $\mathcal{X}^{\ast }=\left( X_{\alpha
}^{\ast }\right) _{\alpha \in I}$.
\begin{enumerate}
\item If $E\in \left( OC\right) $ and $E^{\ast }\in \left(
OC\right) $, then 
$
\left( E\left( \mathcal{X}\right) ,\| \cdot \| _{E\left( 
\mathcal{X}\right) }\right) ^{\ast }\equiv \left( E^{\ast }\left( \mathcal{X}%
^{\ast }\right) ,\| \cdot \| _{E^{\ast }\left( \mathcal{X}%
^{\ast }\right) }\right) .$
\item If $E$ and $X_{\alpha },$ for each $\alpha ,$ are
reflexive, then $E\left( \mathcal{X}\right) $ is.
\end{enumerate}
\end{theorem}

\begin{proof}

Note only that if $E$ is order continuous Banach ideal space over counting
measure space $\left( I,2^{I},m\right) $, then the sequence $\left(
e_{\alpha }\right) _{\alpha \in I}$ of unit vectors forms a $1$%
-unconditional basis in $E$ (see \cite[Proposition 1.c.6, p.18]{Lin-Tza-s}).
Moreover, if additionally $E^{\ast }\in \left( OC\right) $ then $\left(
e_{\alpha }\right) _{\alpha \in I}$ is a shrinking basis, because the
biorthogonal functionals $\left( e_{\alpha }^{\ast }\right) _{\alpha \in I}$
forms a basis in $E^{\ast }.$ Then it is enough to apply Proposition 4.8
from \cite{lau}. 
\end{proof}

\begin{remark}
It is worth noting that if $E$ is reflexive, then $E$ and $E^*$ both belong to the class $(OC)$ (see \cite[Theorem 8.1]{Wnuk}). However, the converse is not always true. For instance, consider the space $E=c_{0}$.
\end{remark}

Here and hereafter, for any space $X$ under consideration, we use the notation $X^{'}$ to represent its K\"{o}the dual. Recall again that $X\in \left( OC\right) $ if and only if $X^{\ast }\equiv
X^{^{\prime }}$ (see \cite{Lin, LinTza}). Thus, applying Theorem \ref{oc}, we
conclude immediately.

\begin{corollary}
\label{Kothe-dual} [\textbf{The K\"{o}the dual of K\"{o}the-Herz spaces}] Let $\mathcal{X}=\left( X_{\alpha }\right) _{\alpha \in I}$ be a family of
Banach ideal spaces (as in Definition \ref{Kothe-Herz}) and $E$ be a Banach
ideal space over counting measure space $\left( I,2^{I},m\right) $. Denote $%
\mathcal{X}^{\ast }=\left( X_{\alpha }^{\ast }\right) _{\alpha \in I}.$ Let $E\left( \mathcal{X}\right) $ be the appropriate K\"{o}the-Herz space.
\newline 
If $E\in \left( OC\right) $ and $E^{\ast }\in \left( OC\right) $ then 
\begin{equation*}
\left( E\left( \mathcal{X}\right) ,\| \cdot \| _{E\left( 
\mathcal{X}\right) }\right) ^{\ast }\equiv \left( E^{^{\prime }}\left( 
\mathcal{X}^{\ast }\right) ,\| \cdot \| _{E^{^{\prime
}}\left( \mathcal{X}^{\ast }\right) }\right) .
\end{equation*}%
If additionally $X_{\alpha }\in \left( OC\right) $ for each $\alpha ,$ then%
\begin{equation*}
\left( E\left( \mathcal{X}\right) ,\| \cdot \| _{E\left( 
\mathcal{X}\right) }\right) ^{^{\prime }}\equiv \left( E^{^{\prime }}\left( 
\mathcal{X}^{^{\prime }}\right) ,\| \cdot \| _{E^{^{\prime
}}\left( \mathcal{X}^{^{\prime }}\right) }\right) ,
\end{equation*}%
where $\mathcal{X}^{^{\prime }}=\left( X_{\alpha }^{^{\prime }}\right)
_{\alpha \in I}.$
\end{corollary}


Applying the definition of K\"{o}the dual (\cite{Lin}) and Corollary %
\ref{Kothe-dual}, we get the following H\"{o}lder type inequality.

\begin{proposition}
\label{HolderIq} [\textbf{H\"{o}lder inequality}]
Let $\mathcal{X}=\left( X_{\alpha }\right) _{\alpha \in I}$  and $E$ be as in Corollary \ref{Kothe-dual}. Suppose $E\in \left( OC\right) $, $E^{\ast
}\in \left( OC\right) $ and $X_{\alpha }\in \left( OC\right) $ for each $%
\alpha .$ Denote $\mathcal{X}^{^{\prime }}=\left( X_{\alpha }^{^{\prime
}}\right) _{\alpha \in I}.$ For all $f\in E\left( \mathcal{X}\right) $ and $%
g\in E^{^{\prime }}\left( \mathcal{X}^{^{\prime }}\right) $, we have $fg\in
L^{1}(\Omega ,\Sigma ,\mu )$ and 
\begin{equation*}
\int_{\Omega} \left\vert fg\right\vert d\mu \leq \left\Vert f\right\Vert _{E\left( 
\mathcal{X}\right) }\cdot \left\Vert g\right\Vert _{E^{^{\prime }}\left( 
\mathcal{X}^{^{\prime }}\right) }.
\end{equation*}
\end{proposition}

 Recall also the definition of a \textit{function norm} . 
\begin{definition}
\label{function norm} $($\cite[Definition 1.1, p.2]{Sharpley1988}$)$ \label{BFS} A
map $\eta :L_{+}^{0}(\Omega ,\Sigma ,\mu )\rightarrow \lbrack 0,\infty ]$ is
said to be a (Banach) function norm over $\Omega $, if the following
properties hold for all $f,g,f_{n}~(n=1,2,3,\cdots )$ in $L_{+}^{0}$ and for
all $\mu $-measurable subsets $E\subseteq \Omega $: \newline
${(P1)}~\eta (f)=0\iff f=0~\mu -a.e.$; ~ $\eta (\alpha f)=\alpha \eta
(f)~~(\alpha \geq 0)$; ~ $\eta (f+g)\leq \eta (f)+\eta (g)$. \newline
${(P2)}$ If $0\leq f\leq g~~\mu -a.e.$, then $\eta (f)\leq \eta (g)$. 
\newline
${(P3)}$ If $0\leq f_{n}\uparrow f~\mu -a.e.$, then $\eta (f_{n})\uparrow
\eta (f)$. \newline
${(P4)}$ If $\mu (E)<\infty $, then $\eta (\chi _{E})<\infty $. \newline
${(P5)}$ If $\mu (E)<\infty $, then $\int_{E}fd\mu \leq C_{E}\eta (f)$ for
some finite positive constant $C_{E}$, independent of $f$. \newline
If $\eta $ is a function norm, then a \textit{Banach function space} $X=(X,\eta)$ is
the collection $X=\{f\in L^{0}(\Omega ,\Sigma ,\mu ):\eta (|f|)<\infty \}$.
\end{definition}

\begin{theorem}
\label{BFS1} Suppose that $\left\{ X_{\alpha }\right\} _{\alpha \in I}$ is a
family Banach ideal spaces in the sense of Definition \ref{Kothe-Herz} and $%
E $ is a Banach ideal space over counting measure space $\left(
I,2^{I},m\right) $. Assume that $E\in \left( FP\right) $ and $X_{\alpha }\in
\left( FP\right) $ for each $\alpha $. Also suppose $E\in \left( OC\right) $%
, $E^{\ast }\in \left( OC\right) $ and $X_{\alpha }\in \left( OC\right) $
for each $\alpha .$ Then the K\"{o}the-Herz space $\left( E\left( \mathcal{X}%
\right) ,\| \cdot \| _{E\left( \mathcal{X}\right) }\right) $
is a Banach function space if and only if for every $A\in \Sigma $ with $\mu
\left( A\right) <\infty $, the following conditions hold:

\begin{itemize}
\item[(a)] $\left \Vert \left( \left\Vert \chi _{\Omega _{\alpha}\cap
A}\right\Vert _{X_{\alpha}}\right) _{\alpha} \right\Vert_E< \infty.$
\item[(b)] $\left\Vert \left( \left\Vert \chi _{\Omega _{\alpha}\cap
A}\right\Vert _{X_{\alpha}^{^{\prime }}}\right)
_{\alpha}\right\Vert_{E^{^{\prime }}}< \infty,$
\end{itemize}

where $X_{\alpha}^{^{\prime }}$ and $E^{^{\prime }}$ are K\"othe duals of $%
X_{\alpha}$ and $E$, respectively.
\end{theorem}

\begin{proof}

The sufficiency: The properties $\left( P1\right) ,~\left( P2\right) $ and $%
\left( P3\right) $ of Definition \ref{BFS} follow from Proposition \ref%
{quasi-normed}, Proposition \ref{ideal} and Theorem \ref{FP} respectively.
Let $\mu \left( A\right) <\infty $. Then, by condition $(a)$ above 
\begin{equation*}
\Vert \chi _{A}\Vert _{E(\mathcal{X})}=\left\Vert \left( \left\Vert \chi
_{\Omega _{\alpha }\cap A}\right\Vert _{X_{\alpha }}\right) _{\alpha
}\right\Vert _{E}<\infty ,
\end{equation*}%
which confirms the property $(P4)$. Finally, using condition $(b)$ and
Proposition \ref{HolderIq}, we get 
\begin{eqnarray*}
\int_{A}fd\mu  &=&\int_{\Omega }f\chi _{A}d\mu  \leq \left\Vert f\right\Vert _{E\left( \mathcal{X}\right) }\cdot
\left\Vert \chi _{A}\right\Vert _{E^{^{\prime }}\left( \mathcal{X}^{^{\prime
}}\right) } \\
&=&\left\Vert f\right\Vert _{E\left( \mathcal{X}\right) }\cdot \left\Vert
\left( \left\Vert \chi _{\Omega _{\alpha }\cap A}\right\Vert _{X_{\alpha
}^{^{\prime }}}\right) _{\alpha }\right\Vert _{E^{^{\prime }}} 
=C_{E(\mathcal{X})}\cdot \left\Vert f\right\Vert _{E\left( \mathcal{X}%
\right) },
\end{eqnarray*}%
where $C_{E(\mathcal{X})}=\left\Vert \left( \left\Vert \chi _{\Omega
_{\alpha }\cap A}\right\Vert _{X_{\alpha }^{^{\prime }}}\right) _{\alpha
}\right\Vert _{E^{^{\prime }}}$. This gives the property $\left(
P5\right) $.

The necessity: Assume that $\left( E\left( \mathcal{X}\right)
,\| \cdot \| _{E\left( \mathcal{X}\right) }\right) $ is a
Banach function space in the sense of Definition \ref{BFS}, then so is its K\"othe dual (\cite[Theorem 2.2, p. 8]{Sharpley1988}). Therefore, by property 
$(P4)$ of $E(\mathcal{X})$ and $E^{^{\prime }}(%
\mathcal{X}^{^{\prime }})$, we have $\left\Vert \left( \left\Vert \chi _{\Omega
_{\alpha }\cap A}\right\Vert _{X_{\alpha }}\right) _{\alpha }\right\Vert
_{E}<\infty $ and $\left\Vert \left( \left\Vert \chi _{\Omega _{\alpha }\cap
A}\right\Vert _{X_{\alpha }^{^{\prime }}}\right) _{\alpha }\right\Vert
_{E^{^{\prime }}}<\infty $.\newline
\end{proof}

\subsection{Lorentz-Herz spaces}
\counterwithin{lemma}{subsection}

Now we turn our attention to a special case of K\"{o}the-Herz spaces, which retains the essential features of the classical Herz spaces $K_{a,q}^{p}(\mathbb{R}^{N})$ while allowing for greater generality. Recall that the Lorentz spaces $L^{p,r}$ are usually considered with the
following two equivalent norms (see \cite[p. 216]{Sharpley1988}).

\begin{definition}
\addtocounter{lemmaaux}{1}
\label{deflorentzspace} The Lorentz space $L^{p,r}=L^{p,r}(\Omega)$ consists
of all $\mu$-measurable functions on $\Omega$ for which the functional $%
\|f\|_{p,r}$ is finite, where 
\begin{equation*}
\left\Vert f\right\Vert _{p,r}=\left\{ 
\begin{array}{ccc}
\left( \int_{0}^{\infty }\left( t^{\frac{1}{p}}f^{\ast }\left( t\right)
\right) ^{r}\frac{dt}{t} \right)^{1/r} & \text{if} & 0<p<\infty ,0<r<\infty
\\ 
\sup_{t>0}t^{\frac{1}{p}}f^{\ast }\left( t\right) & \text{if} & 0<p\leq
\infty ,r=\infty,%
\end{array}%
\right.
\end{equation*}
\end{definition}
\noindent and $f^{*}$ is decreasing (non-increasing) rearrangement of $f$. Obviously, $L^{p,p}\equiv L^{p}$. It should be noted that $\|\cdot\|_{{p,r}}$ is not necessarily a norm but it
is a quasi-norm (see \cite[p. 216]{Sharpley1988}). However, we can equip $%
L^{p,r}$ with the functional $\|\cdot\|_{{(p,r)}}$ given by 
\begin{equation*}
\left\Vert f\right\Vert _{\left( p,r\right) }=\left\{ 
\begin{array}{ccc}
\left( \int_{0}^{\infty }\left( t^{\frac{1}{p}}f^{\ast \ast }\left( t\right)
\right) ^{r}\frac{dt}{t} \right)^{1/r} & \text{if} & 0<p<\infty ,0<r<\infty
\\ 
\sup_{t>0}t^{\frac{1}{p}}f^{\ast \ast }\left( t\right) & \text{if} & 0<p\leq
\infty ,r=\infty,%
\end{array}%
\right.
\end{equation*}
where $\displaystyle f^{**}(t)=\frac{1}{t}\int_0^t f^{*}(t) dt$ is the maximal average function. Then $\left(
L^{p,r},\|\cdot\|_{(p,r)} \right)$ is a normed space for $0<p< \infty$, $%
1 \leq r \leq \infty$ or $p=r=\infty$.

Clearly, if we admit above that $p=\infty$ for $0<r<\infty$ then $%
L^{\infty,r}=\{ 0 \}$ for each $0<r<\infty$. Thus we assume always that if $%
p=\infty$ then $r=\infty$. We borrow the the following notation from \cite{Sharpley1988}: $L^{p,r}=\left( L^{p,r},\| \cdot \|
_{p,r}\right) $ and $L^{\left( p,r\right) }=\left( L^{p,r},\left\Vert \cdot
\right\Vert _{\left( p,r\right) }\right).$ Obviously, since $f^{*} \leq
f^{**}$, $L^{(p,r)} \hookrightarrow L^{p,r} $ with 
\begin{equation*}
\|f\|_{L^{p,r}} \leq \|f\|_{L^{(p,r)}}.
\end{equation*}
Moreover (see \cite{Sharpley1988}), if $1<p \leq \infty$ and $1 \leq r \leq
\infty$ then 
\begin{equation}\label{L-relation}
\|f\|_{L^{(p,r)}} \leq \frac{p}{p-1}\|f\|_{L^{p,r}}.
\end{equation}
Of course, the spaces $\left( L^{p,r}, \|\cdot\|_{{p,r}} \right)$ and $%
\left( L^{p,r}, \|\cdot\|_{{(p,r)}} \right)$ are symmetric.

Here and Hereafter we consider ideal spaces over the Lebesgue measure space $(\Omega, \Sigma, \mu)$, where $\Omega \subset \mathbb{R}^{N}$.

\begin{definition} \label{DefLHS} [\textbf{The Lorentz-Herz spaces}] Let $\mu$ be Lebesgue measure,  $a\in \mathbb{R}$, $\Omega \subset \mathbb{R}^{N},$ $I=%
\mathbb{N}\cup \left\{ -1,0\right\} , $ $\Omega _{-1}=\left\{ \omega \in
\Omega :\left\vert \omega \right\vert <1/2\right\} $ and
$\Omega _{k}=\left\{ \omega \in \Omega :2^{k-1}\leq \left\vert \omega
\right\vert <2^{k}\right\} $ for $k=0,1,2, \cdots$ . We define the Lorentz-Herz spaces $HL_{a,q}^{ p,r }$ as the respective K\"{o}the-Herz spaces
\begin{equation}
HL_{a,q}^{ p,r }=  E\left( \mathcal{X}\right),  
\end{equation} 
where $E=l_{q}\left( w_{a}\right) $ over $I$ with $w_a=w_a\left( k\right) =2^{ak}$
and $X_{k}=L^{p,r}\left( \Omega _{k}\right) $ for all $k\in I$.  Similarly, we
define the spaces 
\begin{equation}
HL_{a,q}^{\left( p,r\right) }=  E\left( \mathcal{X}\right),  
\end{equation} 
where $E=l_{q}\left( w_{a}\right) $ over $I$ with $w_a=w_a\left( k\right) =2^{ak}$
and $X_{k}=L^{(p,r)}\left( \Omega _{k}\right) $ for all $k\in I$. 

\end{definition}
When $1 < p \leq \infty$ and $1 \leq q,r \leq \infty$, it is not difficult to observe that $HL_{a,q}^{\left( p,r\right) }$ defines a norm. Moreover, using equation (\ref{L-relation}) in this case, we can establish the following inequalities: 
$$\|f\|_{HL_{a,q}^{p,r}} \leq  \|f\|_{HL_{a,q}^{\left( p,r\right)}} \leq \frac{p}{p-1}\|f\|_{HL_{a,q}^{ p,r}}.$$

Furthermore, it is well known that (see \cite{Krist})

\begin{enumerate}
\item The functional $\| \cdot \| _{p,r}$ in $L^{p,r}$ is a
quasi-norm if either $0<p,r<\infty $ or $r=\infty $ and $0<p\leq \infty $.
\item The functional $\| \cdot \| _{p,r}$ in $L^{p,r}$ is a
norm if and only if either $1\leq r\leq p<\infty $ or $p=r=\infty $.
\item The functional $\| \cdot \| _{\left( p,r\right) }$ in 
$L^{p,r}$ is a quasi-norm for all $0<r\leq \infty $ and $0<p<\infty $.
\item The functional $\| \cdot \| _{\left( p,r\right) }$ in 
$L^{p,r}$ is a norm if either $1\leq r\leq \infty $ and $1<p<\infty $ or $%
p=r=\infty $.
\end{enumerate}

In all cases 1-4 above $L^{p,r}\in \left( FP\right)$ and $L^{(p,r)}\in
\left( FP\right)$, whence they are all complete. Clearly $E=l_{q}\left(
w_{a}\right)$ over $I$ with $w\left( k\right) =2^{ak}$ ($a\in \mathbb{R}$)
is also a quasi-Banach sequence space with the Fatou property for $0<q \leq \infty$. Applying Proposition \ref{quasi-normed}, Proposition \ref{ideal} and Corollary %
\ref{KH-complete}, we get immediately

\begin{corollary}
\begin{itemize}
\item[(i)] Suppose $0<q\leq \infty .$ If either $0<p,r<\infty $ or $r=\infty 
$ and $0<p\leq \infty, $ \ then $HL_{a,q}^{p,r}$ is a quasi-Banach ideal
space with the Fatou property.
\item[(ii)] Assume that $1\leq q\leq \infty .$ If either $1\leq r\leq
p<\infty $ or $p=r=\infty, $\ then $HL_{a,q}^{p,r}$ is a Banach ideal space
with the Fatou property.

\item[(iii)] If $0<q\leq \infty ,$ $0<r\leq \infty $ and $0<p<\infty, $\ then 
$HL_{a,q}^{\left( p,r\right) }$ is a quasi-Banach ideal space with the Fatou
property.
\item[(iv)] Suppose $1\leq q\leq \infty .$ If either $1\leq r\leq \infty $
and $1<p<\infty $\ or $p=r=\infty, $ then $HL_{a,q}^{\left( p,r\right) }$ is
a Banach ideal space with the Fatou property.
\end{itemize}
\end{corollary}

\begin{theorem}\label{PK}
\label{oc-for-lor-l_q} Let $0<p<\infty$ and $r>0$. The Lorentz spaces $%
\left( L^{p,r},\Vert \cdot \Vert _{{p,r}}\right) $ and $\left(
L^{p,r},\Vert \cdot \Vert _{{(p,r)}}\right) $ are order continuous if and
only if $r<\infty $. 
Similarly, for $E=l_{q}\left( w_{a}\right) $, $E\in (OC)$ if and only if $%
0<q<\infty $. 
\end{theorem}

\begin{proof}
The claim for $E=l_{q}\left( w_{a}\right)$ is evident, and also the necessity part is straightforward for the case of Lorentz spaces. Therefore, it suffices to demonstrate its sufficiency. For the space $L^{(p,r)}$, we can utilize Theorem 1.4 from \cite{Kam-Mal-Isr}. Now, let us consider the space $L^{p,r}$ and observe that $f^{*}(\infty)=0$ for all $f \in L^{p,r}$. We take $f$ and $f_{n}$ from $L^{p,r}$ such that $0\leq f_{n} \leq \left\vert f \right\vert $ and $f_{n} \rightarrow 0$ a.e. We can then apply Theorem 2.1 from \cite{Kol2018-Posit}. Using property $12,$ p. 67 from \cite{KPS82}, we can conclude that $f^{*}_{n}$ converges pointwise to zero. Finally, we can complete the proof by applying the Lebesgue dominated convergence theorem.

\end{proof}
Applying Theorem \ref{PK}, Theorem \ref{oc}, Lemma \ref{dens} and Lemma \ref{sep} we conclude:

\begin{corollary} \label{OC for HL}
Let $0<p<\infty $, $a \in \mathbb{R}$ and $q,r>0$. The Lorentz-Herz spaces $HL_{a,q}^{p,r}$ and $HL_{a,q}^{(p,r)}$ are order
continuous if and only if they are separable if and only if $r< \infty$ and $q< \infty$. Moreover, in that case, the set of simple functions having the finite measure is dense in these spaces.
\end{corollary}

By applying Lemma \ref{inlusions} and utilizing the inclusions between different Lorentz spaces $\left( L^{p,r}, \|\cdot\|_{L^{p,r}} \right)$ (see \cite{Krist}) as well as those between Lebesgue sequence spaces $l_{q}$, we arrive at the following result.

\begin{theorem} \label{wloz}
The following embedding results are valid:

\begin{enumerate}
\item[(A)] If $a \in \mathbb{R}$, $0<p< \infty$, $0 < r_{1} \leq r_{2} \leq
\infty$ and $0< q \leq \infty$, then 
\begin{equation*}
HL^{p,r_1}_{a,q} \hookrightarrow HL^{p,r_2}_{a,q}.
\end{equation*}
\item[(B)] If $a_2 \leq a_1$ and $0 < p,q,r \leq \infty$, then 
\begin{equation*}
HL^{p,r}_{a_1,q} \hookrightarrow HL^{p,r}_{a_2,q}.
\end{equation*}
\item[(C)] Suppose $\mu \left( \Omega\right) <\infty $. If $a \in \mathbb{R}$, $0 < r_1,r_2 \leq \infty$, $0 < q \leq
\infty$ and $0 < p_1 <p_2 < \infty$, then 
\begin{equation*}
HL^{p_2,r_2}_{a,q} \hookrightarrow HL^{p_1,r_1}_{a,q}.
\end{equation*}
\item[(D)] If $a \in \mathbb{R}$, $0 < r_1,r_2 \leq \infty$, $0 < q \leq
\infty$ and $0 < p_1 <p_2 < \infty$, then 
\begin{equation*}
HL^{p_2,r_2}_{a+Np_{0},q} \hookrightarrow HL^{p_1,r_1}_{a,q},
\end{equation*}
where $p_0=\frac{1}{p_1}-\frac{1}{p_2}>0$.
\item[(E)] If $a \in \mathbb{R}$, $0 < p,r \leq \infty$ and $0 < q_2 \leq
q_1 \leq \infty$, then 
\begin{equation*}
HL^{p,r}_{a,q_2} \hookrightarrow HL^{p,r}_{a,q_1}.
\end{equation*}
\end{enumerate}
\end{theorem}

\begin{proof}
    Notice only that conditions (C) and (D) are similar and form a certain dichotomy, namely, if we skip the assumption $\mu \left( \Omega\right) <\infty $, then we should change one of the parameter of Lebesgue sequence space. Hence we only need to prove condition (D). We have
\begin{eqnarray*}
\|f\chi_{\Omega_k}\|_{L^{p_1,r_1}} & = & \left[\int_{0}^{\infty}t^{\frac{r_1}{p_1}-1}\left(f\chi_{\Omega_k}\right)^{*r_1}dt\right]^{\frac{1}{r_1}} 
 =  \left[\int_{0}^{\mu(\Omega_k)}t^{\frac{r_1}{p_1}-1}\left(f\chi_{\Omega_k}\right)^{*r_1}dt\right]^{\frac{1}{r_1}} 
\end{eqnarray*}
Now proceeding as in \cite[p.54]{Krist}, we get
$$\|f\chi_{\Omega_k}\|_{L^{p_1,r_1}} \leq \frac{\mu(\Omega_k)^{\frac{1}{p_1}-\frac{1}{p_2}}}{\left(\frac{r_1}{p_1}-\frac{r_1}{p_2}\right)^{1/r_1}}\|f\chi_{\Omega_k}\|_{L^{p_2,\infty}}.$$
Since $\mu$ is Lebesgue measure, $\mu(\Omega_k) \leq C2^{kN},~ k \in I,$ for some constant $C$ depending on $N$ (see proof of Lemma \ref{bound}). Thus, 
$$\|f\chi_{\Omega_k}\|_{L^{p_1,r_1}} \leq C' 2^{kNp_0}\|f\chi_{\Omega_k}\|_{L^{p_2,\infty}},$$ where $p_0=\frac{1}{p_1}-\frac{1}{p_2}>0$ and $C'$ is a constant.
In consequence,
\begin{eqnarray*}
\|f\|^q_{HL^{p_1,r_1}_{a,q}}  =  \sum\limits_{k \in I} 2^{kaq}\|f\chi_{\Omega_k}\|^q_{L^{p_1,r_1}}  \leq  (C')^q \sum\limits_{k \in I} 2^{kaq+kNp_0q}\|f\chi_{\Omega_k}\|^q_{L^{p_2,\infty}}  = (C')^q\|f\|^q_{HL^{p_2,\infty}_{a+p_0N,q}}.
\end{eqnarray*}
Thus we have $HL^{p_2,\infty}_{a+Np_0,q} \hookrightarrow HL^{p_1,r_1}_{a,q}.$ From (A), $HL^{p_2,r_2}_{a+Np_0,q}\hookrightarrow HL^{p_2,\infty}_{a+Np_0,q}$. Therefore, we get the embedding 
$HL^{p_2,r_2}_{a+Np_0,q}\hookrightarrow HL^{p_1,r_1}_{a,q}$ without finite measure assumption. The case $q=\infty$ can be checked similarly.
\end{proof}

Below we denote by $p^{\prime },q^{\prime },r^{\prime }$ the conjugate exponents to $p,q,r$, respectively.
Applying Corollary \ref{Kothe-dual}, Remark \ref{oc-for-lor-l_q}, Corollary \ref{OC for HL} and Theorem 4.7, p.220, from \cite{Sharpley1988} we get

\begin{corollary}
\label{Kothe-dual1} [\textbf{The K\"{o}the dual of Lorentz-Herz spaces}] If $1 \leq r< \infty$, $1<p< \infty$ and $1<q< \infty$, then for $a \in \mathbb{R}$,
\begin{equation*}
\left( HL_{a,q}^{(p,r)}\right)^{*}=\left( HL_{a,q}^{(p,r)}\right)^{^{\prime
}} = HL_{-a,q^{\prime }}^{(p^{\prime },r^{\prime })}
\end{equation*}%
with equivalent norms. Moreover, if $1 \leq r \leq p < \infty$ and $1<q<
\infty$, then (upto equivalence of norms) 
\begin{equation*}
\left( HL_{a,q}^{p,r}\right)^{*} = \left( HL_{a,q}^{p,r}\right)^{^{\prime }}
= HL_{-a,q^{\prime }}^{p^{\prime },r^{\prime }} .
\end{equation*}%
Furthermore, in this case the H\"{o}lder's inequality (see Proposition \ref%
{HolderIq}) holds.
\end{corollary}

    Taking $p=r$ in the above corollary we get the characterization of dual of Herz spaces proved in  \cite{dual}, Theorem 2.1. Moreover, for suitable values of parameters as above, we conclude that the Lorentz-Herz spaces are reflexive. Applying Theorem \ref{BFS1}, we get

\begin{corollary} \begin{itemize}
\item[(i)] Suppose $1 \leq r< \infty$, $1<p< \infty$ and $1<q< \infty$. Then the
Lorentz-Herz space $HL_{a,q}^{(p,r)}$ is a Banach function space (in the
sense of Definition \ref{function norm}) if and only if for every $A\in
\Sigma $ with $\mu \left( A\right) <\infty $, the following conditions hold:%
\begin{enumerate}
\item[(a)] $\sum\limits_{k \in I } 2^{kaq}\mu(A \cap \Omega_k)^{\frac{q}{p}%
}< \infty$,
\item[(b)] $\sum\limits_{k \in I} 2^{-kaq^{^{\prime }}}\mu(A \cap \Omega_k)^{%
\frac{q^{^{\prime }}}{p^{^{\prime }}}} <\infty$.
\end{enumerate}
\item[(ii)] Suppose $1 \leq r \leq p < \infty$ and $1<q< \infty$. Then the
Lorentz-Herz space $HL_{a,q}^{p,r}$ is a Banach function space (in the sense
of Definition \ref{function norm}) if and only if for every $A\in \Sigma $
with $\mu \left( A\right) <\infty $, the following conditions hold: \newline
\begin{enumerate}
\item[(a)] $\sum\limits_{k \in I } 2^{kaq}\mu(A \cap \Omega_k)^{\frac{q}{p}%
}< \infty$,
\item[(b)] $\sum\limits_{k \in I} 2^{-kaq^{^{\prime }}}\mu(A \cap \Omega_k)^{%
\frac{q^{^{\prime }}}{p^{^{\prime }}}} <\infty$.
\end{enumerate}
\end{itemize}
\end{corollary}

\subsubsection{Real and complex interpolation}

\counterwithin{lemma}{subsubsection}


Let us recall basic facts about the real interpolation method (see {\cite%
{Bergh, Northholland})}. 
Let $Z=(Z_{0},Z_{1})$ be a pair of quasi-Banach spaces ({\cite[section 3.10,
3.11]{Bergh}}) with each of them continuously embedded in a Hausdroff
topological vector space $H$, we call $(Z_{0},Z_{1})$ a compatible couple.
For $z\in Z_{0}+Z_{1}$, the vector sum of $Z_{0}$ and $Z_{1}$, and for $%
0<t<\infty $, the K-functional is defined as 
\begin{equation*}
K(t,z;Z_{0},Z_{1})=\underset{z=z_{0}+z_{1}}{{\inf}}\{\Vert z_{0}\Vert
_{Z_{0}}+t\Vert z_{1}\Vert _{Z_{1}}:z_{i}\in Z_{i},i=0,1\}.
\end{equation*}%
Let $0<\theta <1$, $0<q\leq \infty $ or $0\leq \theta \leq 1$, $q=\infty $,
the space $(Z_{0},Z_{1})_{\theta ,q}=Z_{\theta ,q}$ consists of all $z\in
Z_{0}+Z_{1}$ for which the functional $\Vert z\Vert _{Z_{\theta ,q}}$ is
finite, where 
\begin{equation*}
\Vert z\Vert _{Z_{\theta ,q}}=%
\begin{cases}
\left( \int_{0}^{\infty }\left( \frac{K(t,z;Z_{0},Z_{1})}{t^{\theta }}%
\right) ^{q}\frac{dt}{t}\right) ^{\frac{1}{q}} & \mbox{if}~0<\theta
<1,0<q<\infty \\ 
\underset{0<t<\infty }{\sup }\frac{K(t,z;Z_{0},Z_{1})}{t^{\theta }} & %
\mbox{if}~0\leq \theta \leq 1,q=\infty .%
\end{cases}%
\end{equation*}%
$Z_{\theta ,q}$ is a quasi-Banach space with respect to the quasi-norm $%
\Vert \cdot \Vert _{Z_{\theta ,q}}$. For more details on these topics,
reader may consult \cite{Sharpley1988, Northholland}. Let $\mathbf{L}(X,Y)$
be the set of all bounded linear operators from a quasi-Banach space $X$ to
a quasi-Banach space $Y$.

\begin{definition}
\addtocounter{lemmaaux}{1} $($\cite{Northholland},~p.22$)$ An operator $R\in 
\mathbf{L}(X,Y)$ is said to be a retraction if there exists $S\in \mathbf{L}%
(Y,X)$ such that $RS=I$, the identity operator on $Y$. In this case, $S$ is
called as coretraction (belonging to $R$) and $Y$ is called retract of $X$.
\end{definition}

The following result stated in \cite{Northholland} has been proven for Banach spaces. Nonetheless, it can be extended (with the same proof) to quasi-Banach spaces by using the closed graph theorem, which is still applicable to quasi-Banach spaces (see  \cite[Theorem 1.6, p.10]{KPR84}). 


\begin{theorem}
$($\cite{Northholland}, p. 22$)$ \label{Theorem:retraction} Let $X=(X_{0},X_{1})$
and $Y=(Y_{0},Y_{1})$ be two compatible couples of Banach spaces. Assume
that $R\in \mathbf{L}(X_{0}+X_{1},Y_{0}+Y_{1})$ and $S\in \mathbf{L}%
(Y_{0}+Y_{1},X_{0}+X_{1})$ such that the restriction $S_{i}=S\big|_{Y_{i}}$,
is a coretraction of $\mathbf{L}(Y_{i},X_{i})$, and the restriction $R_{i}=R%
\big|_{X_{i}}$ is a retraction belonging to $\mathbf{L}(X_{i},Y_{i})$, $%
i=0,1 $. ($S_{i}$ belongs to $R_{i}$ in the sense of above definition.) Then 
$S$ represents the isomorphism from $Y_{\theta ,r}$ onto a complemented
subspace of $X_{\theta ,r}$  and
\begin{equation*}
\Vert y\Vert _{Y_{\theta ,r}}\approx \Vert Sy\Vert _{X_{\theta ,r}},
\end{equation*}%
where '$\approx $' represents the equivalence.
\end{theorem}

Now we apply real interpolation method to characterize the interpolation
spaces of Lorentz-Herz spaces. In particular, we get interpolation spaces
for non-homogeneous Herz spaces. For real interpolation of the homogeneous Herz
spaces, we refer readers to \cite{Young}. 

Suppose $B$ is a Banach space, $%
a\in \mathbb{R}$ and $0<q\leq \infty $, define 
\begin{equation*}
\ell _{q}^{a}({B})\coloneqq\left\{ \alpha :\alpha =(\alpha _{k})_{k\geq
-1},\alpha _{k}\in B~\text{and}~\left[ \sum\limits_{k\geq -1}2^{kaq}\Vert
\alpha _{k}\Vert _{B}^{q}\right] ^{\frac{1}{q}}<\infty \right\} ,
\end{equation*}%
with usual modification for $q=\infty $. Then it is easy to see that $\ell
_{q}^{a}(B)$ is a quasi-Banach space (a particular case of K\"{o}the-Bochner sequence space). We recall the following interpolation
result for $\ell _{q}^{a}(B)$ (see \cite{Bergh}, Theorem 5.6.1 and Theorem 5.6.2 in
Section 5.6 and \cite{Northholland}, Section 1.18.1).

\begin{theorem}
\label{Theorem:intp-main} Let $0< \theta <1$, $0<q_0,q_1,q \leq \infty$ and $%
a_0,a_1,a \in \mathbb{R}$.

\begin{enumerate}
\item[(1)] If $a_0 \neq a_1$ and $a=(1-\theta)a_0+\theta a_1$, then 
\begin{equation*}
\left(\ell_{q_0}^{a_0}(B),\ell_{q_1}^{a_1}(B)\right)_{\theta,q}=%
\ell_{q}^{a}(B).
\end{equation*}

\item[(2)] If $\frac{1}{q}=\frac{1-\theta}{q_0}+\frac{\theta}{q_1}$, then 
\begin{equation*}
\left(\ell_{q_0}^{a}(B),\ell_{q_1}^{a}(B)\right)_{\theta,q}=\ell_{q}^{a}(B).
\end{equation*}

\item[(3)] If $0<q_0,q_1< \infty$ and $(B_0,B_1)$ is a compatible couple of
Banach spaces, then 
\begin{equation*}
\left(\ell_{q_0}^{a_0}(B_0),\ell_{q_1}^{a_1}(B_1)\right)_{\theta,q}=%
\ell_{q}^{a}\left((B_0,B_1)_{\theta,q}\right)
\end{equation*}
provided that $a=(1-\theta)a_0+\theta a_1$ and $\frac{1}{q}=\frac{1-\theta}{%
q_0}+\frac{\theta}{q_1}$.
\end{enumerate}
\end{theorem}

We also recall the interpolation result for Lorentz spaces (\cite{Bergh},
Section 5.3 and \cite{Northholland}, Section 1.18.6).

\begin{theorem}
\label{Theorem:Lorentz-intp} Assume that $0<\theta <1$, $0 < p_{0}\neq p_{1}
\leq \infty $, $0 < r_{0},r_{1},r\leq \infty $. Then 
\begin{equation*}
\left( L^{p_{0},r_{0}},L^{p_{1},r_{1}}\right) _{\theta ,r}=L^{p,r},
\end{equation*}%
where $\frac{1}{p}=\frac{1-\theta }{p_{0}}+\frac{\theta }{p_{1}}$. This
formula is also true in the case $p_0=p_1=p$, provided that $\frac{1}{r}=%
\frac{1-\theta}{r_0}+\frac{\theta}{r_1}$.
\end{theorem}

\begin{definition}
Let mappings $S:HL_{a,q}^{p,r}\rightarrow \ell _{q}^{a}\left( L^{p,r}\right) 
$ and $R:\ell _{q}^{a}\left( L^{p,r}\right) \rightarrow HL_{a,q}^{p,r}$ be
defined by%
\begin{equation*}
S\left( f\right) =\left( f\chi _{\Omega _{k}}\right) _{k}\text{ for each }%
f\in HL_{a,q}^{p,r}
\end{equation*}%
and%
\begin{equation*}
R\left( f\right) =\sum_{k}f_{k}\chi _{\Omega _{k}}\text{ for each }f\in \ell
_{q}^{a}\left( L^{p,r}\right) ,f=\left( f_{k}\right) _{k}.
\end{equation*}
\end{definition}

\begin{lemma}
\label{lem:isometry}We have $S\in \mathbf{L}(HL_{a,q}^{p,r},\ell
_{q}^{a}\left( L^{p,r}\right) )$ and $R\in \mathbf{L}(\ell _{q}^{a}\left(
L^{p,r}\right) ,HL_{a,q}^{p,r})$. Moreover, $S$ is an isometry and $R\circ
S=I$, the identity map.
\end{lemma}

\begin{proof}
The proof is direct.
\end{proof}

Now we are in a position to formulate a real interpolation results for
Lorentz-Herz spaces.

\begin{theorem}
\label{myinterpolation}

\begin{itemize}
\item[(1)] Assume that $0<\theta <1$, $1\leq r\leq p<\infty $, $%
0<q_{0},q_{1},q\leq \infty $ and $a_{0},a_{1},a\in \mathbb{R}$. Then the
following interpolation formulae are valid:\newline

\begin{itemize}
\item[(i)] If $a_{0}\neq a_{1}$ and $a=(1-\theta )a_{0}+\theta a_{1}$, then 
\begin{equation*}
\left( HL_{a_{0},q_{0}}^{p,r},HL_{a_{1},q_{1}}^{p,r}\right) _{\theta
,q}=HL_{a,q}^{p,r}.
\end{equation*}

\item[(ii)] If $\frac{1}{q}=\frac{1-\theta }{q_{0}}+\frac{\theta }{q_{1}}$,
then 
\begin{equation*}
\left( HL_{a,q_{0}}^{p,r},HL_{a,q_{1}}^{p,r}\right) _{\theta
,q}=HL_{a,q}^{p,r}.
\end{equation*}
\end{itemize}

\item[(2)] If $0<q_{0},q_{1},q<\infty $, $1\leq p_{0}\neq p_{1}\leq \infty $%
, $1\leq r_{0},r_{1}\leq \infty $ and $1\leq r_{i}\leq p_{i}<\infty ,~i=0,1$%
, then 
\begin{equation*}
\left( HL_{a_{0},q_{0}}^{p_{0},r_{0}},HL_{a_{1},q_{1}}^{p_{1},r_{1}}\right)
_{\theta ,q}=HL_{a,q}^{p,q}
\end{equation*}%
provided that $a=(1-\theta )a_{0}+\theta a_{1}$, $\frac{1}{p}=\frac{1-\theta 
}{p_{0}}+\frac{\theta }{p_{1}}$ and $\frac{1}{q}=\frac{1-\theta }{q_{0}}+%
\frac{\theta }{q_{1}}$. In addition, if $p=q$, then 
\begin{equation*}
\left( HL_{a_{0},q_{0}}^{p_{0},r_{0}},HL_{a_{1},q_{1}}^{p_{1},r_{1}}\right)
_{\theta ,q}=K_{a,p}^{p}.
\end{equation*}

\item[(3)] If $0<q_{0},q_{1}<\infty ,$ $a=(1-\theta )a_{0}+\theta a_{1}$ and 
$\frac{1}{q}=\frac{1-\theta }{q_{0}}+\frac{\theta }{q_{1}}$, then 
\begin{equation*}
\left( K_{a_{0},q_{0}}^{p_{0}},K_{a_{1},q_{1}}^{p_{1}}\right) _{\theta
,q}=HL_{a,q}^{p,q}
\end{equation*}%
provided that $1\leq p_{0}\neq p_{1}\leq \infty $ and $\frac{1}{p}=\frac{%
1-\theta }{p_{0}}+\frac{\theta }{p_{1}}$.
\end{itemize}
\end{theorem}

\begin{proof}
(1) In view of Theorem \ref{Theorem:retraction},
Theorem \ref{Theorem:intp-main} and Lemma \ref{lem:isometry}, we have 
\begin{equation*}
\Vert f\Vert _{\left( HL_{a_{0},q_{0}}^{p,r},HL_{a_{1},q_{1}}^{p,r}\right)
_{\theta ,q}}\approx \Vert S(f)\Vert _{\left( \ell
_{q_{0}}^{a_{0}}(L^{p,r}),\ell _{q_{1}}^{a_{1}}(L^{p,r})\right) _{\theta
,q}}=\Vert \left( f\chi _{\Omega _{k}}\right) _{k}\Vert _{\ell
_{q}^{a}(L^{p,r})}=\Vert f\Vert _{HL_{a,q}^{p,r}},
\end{equation*}%
i.e., 
\begin{equation*}
\left( HL_{a_{0},q_{0}}^{p,r},HL_{a_{1},q_{1}}^{p,r}\right) _{\theta
,q}=HL_{a,q}^{p,r}.
\end{equation*}%
This proves (i), the proof of (ii) is similar. The second part $(2)$ can be proved by using the same technique along with Theorem \ref{Theorem:Lorentz-intp}. Further, we have
\begin{equation*}
\left( K_{a_{0},q_{0}}^{p_{0}},K_{a_{1},q_{1}}^{p_{1}}\right) _{\theta
,q}=\left(
HL_{a_{0},q_{0}}^{p_{0},p_{0}},HL_{a_{1},q_{1}}^{p_{1},p_{1}}\right)
_{\theta ,q}=HL_{a,q}^{p,q},
\end{equation*}
which proves (3). This completes the proof.
\end{proof}Theorem \ref{myinterpolation} shows that, with appropriately
chosen parameters, the interpolation spaces of Lorentz-Herz spaces are again
Lorentz-Herz spaces. Thus, they are (in some sense) closed under
interpolation. 
In addition, the Lorentz-Herz space $HL_{a,q}^{p,q}$ appears as an
interpolation space between the non-homogeneous Herz spaces. Furthermore, if 
$p=r$ and $p_{i}=r_{i}$, $i=0,1$, we get real interpolation spaces of the
non-homogeneous Herz spaces.

From now until the end of this subsection we consider complex Banach spaces. We briefly recall the complex method of interpolation for Banach spaces (see
for example \cite{Bergh}). Given a couple $(A_{0},A_{1})$ of complex Banach spaces, consider
the linear space $\mathcal{F}(A_{0},A_{1})$ of all functions $f:\mathbb{C}%
\rightarrow A_{0}+A_{1}$, that are bounded and continuous on the closed
strip $\bar{S}=\{z:0\leq Re(z)\leq 1\}$, analytic on the open strip $%
S=\{z:0<Re(z)<1\}$ and the functions $t\rightarrow f(s+it)$ are continuous
from $\mathbb{R}$ into $A_{s}$ with $\lim_{\left\vert t\right\vert
\rightarrow \infty }f(s+it)=0$ $(s=0,1).$ The space $\mathcal{F}%
(A_{0},A_{1})$, is a Banach space with respect to the norm 
\begin{equation*}
\Vert f\Vert _{\mathcal{F}(A_{0},A_{1})}=\max \left( \underset{t\in \mathbb{R%
}}{\sup }\Vert f(it)\Vert _{A_{0}},\underset{t\in \mathbb{R}}{\sup }\Vert
f(1+it)\Vert _{A_{1}}\right) .
\end{equation*}%
For $0<\theta <1$, the space $[A_{0},A_{1}]_{\theta }$ consisting of all $%
x\in A_{0}+A_{1}$ such that $x=f(\theta )$ for some $f\in \mathcal{F}%
(A_{0},A_{1}),$ equipped with the norm 
\begin{equation*}
\Vert x\Vert _{\lbrack \theta ]}=\inf \{\Vert f\Vert _{\mathcal{F}%
(A_{0},A_{1})}:f(\theta )=x,~f\in \mathcal{F}(A_{0},A_{1})\},
\end{equation*}%
is a Banach space. This space is intermediate for the couple $(A_{0},A_{1})$
and the construction $C_{\theta }:(A_{0},A_{1})\rightarrow \lbrack
A_{0},A_{1}]_{\theta }$ is the interpolation functor for bounded, linear
operators. The space $[A_{0},A_{1}]_{\theta }$ is called the complex
interpolation space.

We also recall the following result on complex interpolation of Lebesgue-Bochner sequence spaces (\cite{Bergh}, Theorem 5.6.3, p. 123). Below we exclude the case $ q_0=q_1 = \infty$, because then the complex interpolation does not work as we would like to (see  \cite[p. 123, (16)]{Northholland}).

\begin{theorem}
\label{Complex} Let $0< \theta <1$, $1 \leq q_0,q_1 \leq \infty$ (except the case $ q_0=q_1 = \infty$) and $%
a_0,a_1 \in \mathbb{R}$. Then 
\begin{equation*}
\left[\ell_{q_0}^{a_0}(A_0),\ell_{q_1}^{a_1}(A_1)\right]_{\theta}=%
\ell_{q}^{a}\left([A_0,A_1]_{\theta} \right),
\end{equation*}
where $\frac{1}{q}=\frac{1-\theta}{q_0}+\frac{\theta}{q_1}$ and $%
a=(1-\theta)a_0+\theta a_1$.
\end{theorem}

Now we give our complex interpolation result for Lorentz-Herz spaces.

\begin{theorem}
Let $0<\theta<1$, $1<r_i \leq p_i<\infty~(i=0,1)$, $1 \leq q_0,q_1 \leq
\infty$ (except the case $ q_0=q_1 = \infty$) and $a_0,a_1 \in \mathbb{R}$. Then 
\begin{equation*}
\left[ HL^{p_0,r_0}_{a_0,q_0},HL^{p_1,r_1}_{a_1,q_1} \right]%
_{\theta}=HL^{p,r}_{a,q},
\end{equation*}
where $\frac{1}{p}=\frac{1-\theta}{p_0}+\frac{\theta}{p_1}$, $\frac{1}{q}=%
\frac{1-\theta}{q_0}+\frac{\theta}{q_1}$, $\frac{1}{r}=\frac{1-\theta}{r_0}+%
\frac{\theta}{r_1}$ and $a=(1-\theta)a_0+\theta a_1$.
\end{theorem}

\begin{proof}
We only need to observe that $\left[ L^{p_{0},r_{0}},L^{p_{1},r_{1}}\right]
_{\theta }=L^{p,r}$ (see \cite[Section 2]{FSOLST}). Then the result is
immediate from Theorem \ref{Complex}, Theorem \ref{Theorem:retraction} (which still true for any interpolation functor, see \cite{Northholland}, p. 22) ) and Lemma %
\ref{lem:isometry}.
\end{proof}

Taking above $p_{0}=r_{0}$ and $p_{1}=r_{1}$ we get immediately the result on complex interpolation on Herz spaces (see \cite{dual, Coifman}).
\begin{corollary}
 Let $0<\theta<1$, $1< p_0<\infty$, $1< p_1<\infty$, $1 \leq q_0,q_1 \leq
\infty$ (except the case $ q_0=q_1 = \infty$) and $a_0,a_1 \in \mathbb{R}$. Then 
\begin{equation*}
\left[ K^{p_0}_{a_0,q_0},K^{p_1}_{a_1,q_1} \right]%
_{\theta}=K^{p}_{a,q},
\end{equation*}
where $\frac{1}{p}=\frac{1-\theta}{p_0}+\frac{\theta}{p_1}$, $\frac{1}{q}=%
\frac{1-\theta}{q_0}+\frac{\theta}{q_1}$  and $a=(1-\theta)a_0+\theta a_1$.
   
\end{corollary}

\subsubsection{Boundedness of some operators}

In this section, we prove boundedness of a wide class of sublinear operators
on Lorentz-Herz spaces. 
The class of operators whose kernels satisfy the standard size condition
\begin{equation}
   |k(x,y)|\leq \frac{C}{|x-y|^{N}},
\end{equation} have been the subject of extensive research due to their wide range of applications in harmonic analysis (e.g., see \cite{Fefferman1979PAMS, dual,ossc1,sizecondition,Stein1957PAMS,Stein-1970}). This condition ensures that the operators are well-defined and possess desirable properties such as continuity, boundedness, and compactness. Singular integral operators of such kind are essential in the study of the fine structure of functions and their properties such as regularity, smoothness, and decay at infinity.
In particular, the authors in \cite{dual, sublinear1} proved that if a
sublinear operator $T$ is bounded on $L^{p}(\mathbb{R}^{N})$ and is
satisfying the size condition 
\begin{equation}
|Tf(x)|\leq C\int_{\mathbb{R}^{N}}\frac{|f(y)|}{|x-y|^{N}}d\mu (y),~x\notin
supp(f)  \label{sizeconditin}
\end{equation}%
for all $f\in L^{1}(\mathbb{R}^N)$ with compact support, where $C$ is the
constant independent of $f$ and $x$, then $T$ is bounded on $K_{a,q}^{p}(%
\mathbb{R}^{N})$. We extend this result to the spaces $HL_{a,q}^{p,r}(\mathbb{R}^N)$, which are more general than the corresponding Herz spaces discussed in \cite{dual}. Our result is even slightly stronger since we do not assume $T$ to be linear for $a=0$. Moreover, for $r=\infty$, we obtain the corresponding result for weak Herz spaces, which is not available in the existing literature.


From this point onwards we follow the notation $a\lesssim b$ for $a\leq Cb$, where $C$ is a constant, independent of appropriate quantities. Before we proceed to the main theorem of this section, we require the following lemma.

%
%
%
%

\begin{lemma}
\label{bound} Let $\Omega=\mathbb{R}^N$ and $k,v \geq -1$ be integers. For $%
1<p<\infty$ and $1\leq r \leq \infty$, 
\begin{equation*}
2^{-kN}\|\chi_{\Omega_k}\|_{{p,r}}\|\chi_{\Omega_v}\|_{{p^{\prime
},r^{\prime }}} \lesssim 2^{\frac{N}{p^{\prime }}(v-k)}.
\end{equation*}
\end{lemma}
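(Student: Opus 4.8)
The plan is to compute both Lorentz-space norms of characteristic functions of the annuli $A_u$ and $A_v$ explicitly, reducing everything to the measures $\mu(A_u)$ and $\mu(A_v)$, and then to bound these measures in terms of powers of $2$. The key observation is that for a characteristic function $\chi_E$ of a set of finite measure, the non-increasing rearrangement is simply $\chi_{[0,\mu(E))}$, so the Lorentz norm collapses to a single power of $\mu(E)$. Concretely, for $1<p<\infty$ and $1\le r\le\infty$ one has
\begin{equation*}
\|\chi_{E}\|_{L^{p,r}} = \left(\frac{p}{r}\right)^{1/r}\mu(E)^{1/p}
\end{equation*}
when $r<\infty$ (with the obvious reading $\|\chi_E\|_{L^{p,\infty}}=\mu(E)^{1/p}$ for $r=\infty$), since $\int_0^{\mu(E)} t^{r/p-1}\,dt = \frac{p}{r}\mu(E)^{r/p}$. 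Applying this to both factors, with the second one using the conjugate pair $(p',r')$, gives
\begin{equation*}
\|\chi_{A_u}\|_{L^{p,r}}\|\chi_{A_v}\|_{L^{p',r'}} = \left(\frac{p}{r}\right)^{1/r}\left(\frac{p'}{r'}\right)^{1/r'}\mu(A_u)^{1/p}\,\mu(A_v)^{1/p'},
\end{equation*}
where the two constants in front are absorbed into $C$.

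Next I would estimate the measures of the annuli. The set $A_u=\{x\in\Omega : 2^{u-1}\le|x|<2^u\}$ sits inside a Euclidean ball of radius $2^u$, so $\mu(A_u)\le \mu(\{|x|<2^u\}) \le c_N 2^{uN}$ for a dimensional constant $c_N$ (the volume of the unit ball times $2^{uN}$), and likewise $\mu(A_v)\le c_N 2^{vN}$; the boundary case $A_{-1}=\{|x|<1/2\}$ is covered by the same ball estimate. Substituting these bounds yields
\begin{equation*}
\|\chi_{A_u}\|_{L^{p,r}}\|\chi_{A_v}\|_{L^{p',r'}} \le C\, 2^{uN/p}\,2^{vN/p'}.
\end{equation*}
Multiplying through by the factor $2^{-uN}$ from the statement and using $\frac{1}{p}-1=-\frac{1}{p'}$ gives $2^{-uN}2^{uN/p} = 2^{-uN/p'}$, so the right-hand side becomes $C\,2^{-uN/p'}2^{vN/p'} = C\,2^{\frac{N}{p'}(v-u)}$, which is exactly the claimed bound.

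There is essentially no hard step here: the whole argument is the elementary rearrangement computation plus the volume bound for balls, which is why the statement is offered as an exercise. The only point deserving mild care is keeping the constants from the two Lorentz norms (and the case distinctions $r<\infty$ versus $r=\infty$, and $r'$ versus $r'=\infty$) straight, but since they are all absorbed into the single constant $C$, they do not affect the final inequality. I would therefore organize the write-up as: first record the closed form of $\|\chi_E\|_{L^{p,r}}$, then bound $\mu(A_u)$ and $\mu(A_v)$ by dimensional multiples of $2^{uN}$ and $2^{vN}$, and finally combine the exponents using the relation $\tfrac1p+\tfrac1{p'}=1$.
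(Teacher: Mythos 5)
Your proof is correct and complete: the explicit formula $\|\chi_E\|_{L^{p,r}} = \left(\tfrac{p}{r}\right)^{1/r}\mu(E)^{1/p}$ (with the obvious reading when $r=\infty$), the ball bound $\mu(A_u)\leq c_N 2^{uN}$ (which also covers $A_{-1}$), and the identity $1-\tfrac{1}{p}=\tfrac{1}{p'}$ combine to give exactly the claimed estimate, and only upper bounds on the measures are needed since both exponents $\tfrac{1}{p},\tfrac{1}{p'}$ are positive. The paper gives no proof to compare against --- it explicitly leaves the lemma as an exercise in ``elementary calculations'' --- and your argument is precisely that intended calculation.
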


\begin{proof}
It is quite easy to see that, for $0<p<\infty $, $0<r<\infty $ and a
measurable set $A\subseteq \mathbb{R}^{N}$, $\Vert \chi _{A}\Vert
_{p,r}=\left( \frac{p}{r}\right) ^{\frac{1}{r}}\mu (A)^{\frac{1}{p}}$ and $%
\Vert \chi _{A}\Vert _{p,\infty }=\mu (A)^{\frac{1}{p}}$. Moreover, in
closed form, the volume of ball of radius $R$ in $\mathbb{R}^{N}$ is given by 
$\frac{\pi ^{N/2}}{\Gamma (\frac{n}{2}+1)}R^{N}$, where $\Gamma $ is the
Euler's Gamma function. Thus 
\begin{equation*}
\mu (\Omega _{k})=%
\begin{cases}
\frac{\pi^{N/2}}{\Gamma(\frac{N}{2}+1)}\left(1-\frac{1}{2^N}\right)2^{kN},~ 
\text{if} ~k \geq 0 \\ 
\frac{\pi^{N/2}}{\Gamma(\frac{N}{2}+1)}2^{-N}, ~\text{if}~ k = -1.%
\end{cases}%
\end{equation*}%
Therefore, it follows that there is a constant $\gamma $, depending upon $%
p,r $ and $N$, such that 
\begin{equation*}
2^{-kN}\Vert \chi _{\Omega _{k}}\Vert _{{p,r}}\Vert \chi _{\Omega _{v}}\Vert
_{{p^{\prime },r^{\prime }}}\leq \gamma \cdot 2^{\frac{N}{p^{\prime }}(v-k)}.
\end{equation*}%
Equivalently, 
\begin{equation*}
2^{-kN}\Vert \chi _{\Omega _{k}}\Vert _{{p,r}}\Vert \chi _{\Omega _{v}}\Vert
_{{p^{\prime },r^{\prime }}}\lesssim 2^{\frac{N}{p^{\prime }}(v-k)}.
\end{equation*}%
\end{proof}

\begin{theorem}
\label{BSO} Let $T$ be a sublinear operator satisfying the size condition (%
\ref{sizeconditin}) for all $f\in L^{1}(\mathbb{R}^N)$ with compact
support. Suppose that $1<p<\infty $, $1\leq q,r\leq \infty $ and $T$ is
bounded on $L^{p,r}(\mathbb{R}^N)$. Then for $\frac{-N}{p}<a<\frac{N}{%
p^{\prime }}$, $T$ is bounded on $HL_{a,q}^{p,r}(\mathbb{R}^N)$. 
\end{theorem}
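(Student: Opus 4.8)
The plan is to convert the boundedness of $T$ into a weighted convolution estimate on $\ell^q$. First I would write $f=\sum_{j\geq -1}f_j$ with $f_j=f\chi_{A_j}$. Since $\mu(A_j)<\infty$ and $p>1$, the H\"older inequality for Lorentz spaces gives $\int_{A_j}|f|\,d\mu\leq \|f\chi_{A_j}\|_{L^{p,r}}\|\chi_{A_j}\|_{L^{p',r'}}<\infty$, so each $f_j$ lies in $L^1(\mathbb{R}^N)$ and has bounded (hence compact) support; therefore the size condition (\ref{sizeconditin}) applies to every $f_j$. Sublinearity of $T$ yields the pointwise bound $|Tf|\leq\sum_{j\geq-1}|Tf_j|$, and, passing to the equivalent genuine norm $\|\cdot\|^{*}_{L^{p,r}}$ (Theorem \ref{eqnorm}) to legitimize countable subadditivity, one obtains for every $u\geq -1$ that $\|(Tf)\chi_{A_u}\|_{L^{p,r}}\leq C\sum_{j\geq-1}\|(Tf_j)\chi_{A_u}\|_{L^{p,r}}$.

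Next I would split the inner sum according to the position of $j$ relative to $u$. On the diagonal block $|u-j|\leq 1$ the boundedness of $T$ on $L^{p,r}$ gives directly $\|(Tf_j)\chi_{A_u}\|_{L^{p,r}}\leq\|Tf_j\|_{L^{p,r}}\leq C\|f\chi_{A_j}\|_{L^{p,r}}$. For the off-diagonal tails the supports are well separated: if $j\leq u-2$ then $|x-y|\gtrsim 2^u$ for $x\in A_u$, $y\in A_j$, while if $j\geq u+2$ then $|x-y|\gtrsim 2^j$. Inserting these into the size condition and bounding $\int_{A_j}|f|\,d\mu$ by H\"older as above, the interior tail is controlled using Lemma \ref{bound}, $2^{-uN}\|\chi_{A_u}\|_{L^{p,r}}\|\chi_{A_j}\|_{L^{p',r'}}\leq C\,2^{\frac{N}{p'}(j-u)}$, and the exterior tail by its companion estimate $2^{-jN}\|\chi_{A_u}\|_{L^{p,r}}\|\chi_{A_j}\|_{L^{p',r'}}\leq C\,2^{\frac{N}{p}(u-j)}$, proved by the same computation with the roles of $2^{uN}$ and $2^{jN}$ interchanged. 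Altogether $\|(Tf_j)\chi_{A_u}\|_{L^{p,r}}$ is dominated by $C\,2^{\frac{N}{p'}(j-u)}\|f\chi_{A_j}\|_{L^{p,r}}$ on the interior and by $C\,2^{\frac{N}{p}(u-j)}\|f\chi_{A_j}\|_{L^{p,r}}$ on the exterior.

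Writing $b_j=2^{ja}\|f\chi_{A_j}\|_{L^{p,r}}$ and $d_u=2^{ua}\|(Tf)\chi_{A_u}\|_{L^{p,r}}$, the three estimates combine into $d_u\leq C\sum_{j}\omega_{u-j}\,b_j$, where the kernel $\omega_k$ equals $2^{k(a-N/p')}$ for $k\geq 2$, is bounded by $C\,2^{|a|}$ for $|k|\leq 1$, and equals $2^{k(a+N/p)}$ for $k\leq-2$. The hypothesis $-N/p<a<N/p'$ is exactly what makes the exponent $a-N/p'$ negative and $a+N/p$ positive, so both geometric tails of $\omega$ converge and $\omega\in\ell^1$. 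Young's convolution inequality $\|\omega*b\|_{\ell^q}\leq\|\omega\|_{\ell^1}\|b\|_{\ell^q}$, valid for all $1\leq q\leq\infty$, then yields $\|Tf\|_{HL_{p,q}^{a,r}}=\|d\|_{\ell^q}\leq C\|b\|_{\ell^q}=C\|f\|_{HL_{p,q}^{a,r}}$, which is the claim; the case $q=\infty$ is the obvious $\ell^\infty$ variant.

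The geometric part of the argument is routine once the three-region decomposition and Lemma \ref{bound} are in hand, and the constraints $a<N/p'$ and $a>-N/p$ are not incidental but mark precisely the thresholds at which the interior and exterior kernels cease to be summable, so they cannot be relaxed by this method. I expect the only genuinely delicate point to be the justification of the pointwise bound $|Tf|\leq\sum_j|Tf_j|$ together with the interchange of $T$ and the infinite decomposition of $f$ — a convergence matter usually settled by a density or continuity argument, and the step where the mere sublinearity of $T$ (rather than full linearity) must be invoked carefully.
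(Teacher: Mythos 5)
Your proof is correct, and its skeleton coincides with the paper's: the same three-region decomposition ($|u-j|\le 1$, $j\le u-2$, $j\ge u+2$), the same use of the size condition (\ref{sizeconditin}) plus H\"older's inequality in Lorentz spaces for the off-diagonal blocks, Lemma \ref{bound} and its companion estimate producing the factors $2^{\frac{N}{p'}(j-u)}$ and $2^{\frac{N}{p}(u-j)}$, and $L^{p,r}$-boundedness on the diagonal. You differ from the paper at two technical junctures, both legitimately. First, to push the infinite sum $\sum_j |Tf_j|$ through the Lorentz quasi-norm, the paper invokes the Carro--Mart\'{\i}n rearrangement estimate (Corollary \ref{Cor:Sum}), Tonelli's theorem, and then the norm equivalence of Theorem \ref{eqnorm} to return from $f^{**}$ to $f^{*}$; you instead pass directly to the genuine norm $\|\cdot\|^{*}_{L^{p,r}}$ via Theorem \ref{eqnorm} and use countable subadditivity, which holds because that norm has the Fatou property -- a cleaner route that bypasses Corollary \ref{Cor:Sum} entirely (you should say the word ``Fatou'' explicitly, since finite subadditivity alone does not give the countable version). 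Second, to sum the off-diagonal contributions the paper applies H\"older's inequality to the inner sum with a split geometric weight $2^{\beta(v-u)/2}$ and then interchanges the order of summation; you package the same estimates as a discrete convolution $d_u\le C\sum_j\omega_{u-j}b_j$ with $\omega\in\ell^1$ and quote Young's inequality, which handles all $1\le q\le\infty$ at once and makes transparent that the hypotheses $-N/p<a<N/p'$ are exactly the summability thresholds for the two tails of $\omega$. Two further remarks: you are more careful than the paper in verifying that each $f_j$ is in $L^1$ with compact support before applying (\ref{sizeconditin}); and the one point you flag as delicate -- justifying $|Tf|\le\sum_j|Tf_j|$ for an infinite decomposition when $T$ is merely sublinear -- is glossed over in the paper's proof as well (it writes this inequality without comment), so it is a shared gap of convention rather than a defect of your argument relative to the paper's.
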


\begin{proof}
Let $f\in HL_{a,q}^{p,r}(\mathbb{R}^N)$. Since $T$ is sublinear, we have 
\begin{eqnarray*}
\Vert Tf\Vert _{HL_{a,q}^{p,r}} &=&\left[ \sum\limits_{k\in I}2^{kaq}\Vert
\chi _{\Omega _{k}}\cdot Tf\Vert _{{p,r}}^{q}\right] ^{\frac{1}{q}} \leq \left[ \sum\limits_{k\in I}2^{kaq}\Vert \chi _{\Omega _{k}}\cdot
\sum\limits_{v\in I}\big\vert T(f\chi _{\Omega _{v}})\big\vert\Vert _{{(p,r)}%
}^{q}\right] ^{\frac{1}{q}} \\
&\leq &\left[ \sum\limits_{k\in I}2^{kaq}\left( \sum\limits_{v\in I}\Vert
\chi _{\Omega _{k}}\big\vert T(f\chi _{\Omega _{v}})\big\vert\Vert _{{(p,r)}%
}\right) ^{q}\right] ^{\frac{1}{q}} \\
&\lesssim &\left[ \sum\limits_{k\in I}2^{kaq}\left( \sum\limits_{v\in
I}\Vert \chi _{\Omega _{k}}T(f\chi _{\Omega _{v}})\Vert _{{p,r}}\right) ^{q}%
\right] ^{\frac{1}{q}}.
\end{eqnarray*}%
The last sum can be expressed as 
\[
\displaystyle\left[ \sum\limits_{k\in I}2^{kaq}\left(
\sum\limits_{v=-1}^{k-2}\Vert \chi _{\Omega _{k}}T(f\chi _{\Omega
_{v}})\Vert _{{p,r}}+\sum\limits_{v=k-1}^{k+1}\Vert \chi _{\Omega
_{k}}T(f\chi _{\Omega _{v}})\Vert _{{p,r}}+\sum\limits_{v\geq k+2}\Vert \chi
_{\Omega _{k}}T(f\chi _{\Omega _{v}})\Vert _{{p,r}}\right) ^{q}\right] ^{%
\frac{1}{q}}.
\]%
Applying Minkowski's inequality, we get 
\begin{eqnarray}\label{threesums}
\Vert Tf\Vert _{HL_{a,q}^{p,r}} & \lesssim & \left[ \sum\limits_{k \in I } 2^{kaq} \left(\sum\limits_{v = -1}^{k-2}\|\chi_{\Omega_k}T(f\chi_{\Omega_v})\|_{{p,r}}
\right)^{q} \right]^{\frac{1}{q}} + \left[ \sum\limits_{k \in I }
2^{kaq} \left( \sum\limits_{v=k-1}^{k+1}\|\chi_{\Omega_k}T(f\chi_{\Omega_v})\|_{{p,r}} \right)^{q} \right]^{\frac{1}{q}} \nonumber \\ & + & \left[ \sum\limits_{k \in I } 2^{kaq} \left(\sum\limits_{v \geq k+2}\|\chi_{\Omega_k}T(f\chi_{\Omega_v})\|_{{p,r}}\right)^{q} \right]^{\frac{1}{q}} \nonumber \\ & := & S_{1}+S_{2}+S_{3}.
\end{eqnarray}
%

Now observe that for $v\leq k-2$ and a.e. $x\in \Omega _{k}$, the size
condition (\ref{sizeconditin}) and the H\"{o}lder inequality for $L^{p,r}$
spaces (recall that $\left( L^{p,r}\right) ^{\prime }=L^{p^{\prime
},r^{\prime }}$ for $1<p<\infty $ and $1\leq r\leq \infty ,$ see Theorem 4 in 
\cite{KLM-2019}, for even more general case) imply that 
\[
|T(f\chi _{\Omega _{v}})(x)|\lesssim 2^{-kN}\Vert f\chi _{\Omega _{v}}\Vert
_{{p,r}}\Vert \chi _{\Omega _{v}}\Vert _{{p^{\prime },r^{\prime }}}.
\]%
Therefore, 
\[
S_{1}\lesssim \left[ \sum\limits_{k\in I}2^{kaq}\left(
\sum\limits_{v=-1}^{k-2}2^{-kN}\Vert \chi _{\Omega _{k}}\Vert _{{p,r}}\Vert
f\chi _{\Omega _{v}}\Vert _{{p,r}}\Vert \chi _{\Omega _{v}}\Vert _{{%
p^{\prime },r^{\prime }}}\right) ^{q}\right] ^{\frac{1}{q}}.
\]%
Using Lemma \ref{bound} , we get 
\begin{eqnarray*}
S_{1} &\lesssim &\left[ \sum\limits_{k\in I}2^{kaq}\left(
\sum\limits_{v=-1}^{k-2}2^{va}\Vert f\chi _{\Omega _{v}}\Vert _{{p,r}}\cdot
2^{\frac{N}{p^{\prime }}(v-k)-va}\right) ^{q}\right] ^{\frac{1}{q}} \\
&\lesssim &\left[ \sum\limits_{k\geq -1}\left(
\sum\limits_{v=-1}^{k-2}2^{va}\Vert f\chi _{\Omega _{v}}\Vert _{{p,r}}\cdot
2^{\beta (v-k)}\right) ^{q}\right] ^{\frac{1}{q}},
\end{eqnarray*}%
where $\beta =\frac{N}{p^{\prime }}-a>0$. Using H\"{o}lder's inequality for
the inner sum, we obtain 
\begin{eqnarray*}
S_{1} &\lesssim &\left[ \sum\limits_{k\geq -1}\left\{ \left(
\sum\limits_{v=-1}^{k-2}2^{vaq}\Vert f\chi _{\Omega _{v}}\Vert _{{p,r}%
}^{q}\cdot 2^{q\beta (v-k)/2}\right) \left(
\sum\limits_{v=-1}^{k-2}2^{q^{\prime }\beta (v-k)/2}\right) ^{\frac{q}{%
q^{\prime }}}\right\} \right] ^{\frac{1}{q}} \\
&\lesssim &\left[ \sum\limits_{k\geq -1}\left(
\sum\limits_{v=-1}^{k-2}2^{vaq}\Vert f\chi _{\Omega _{v}}\Vert _{{p,r}%
}^{q}\cdot 2^{q\beta (v-k)/2}\right) \right] ^{\frac{1}{q}}.
\end{eqnarray*}%
Interchanging the order of summations, we conclude that 
\[
S_{1}\lesssim \left[ \sum\limits_{v\geq -1}\left( \sum\limits_{k\geq
v+2}2^{vaq}\Vert f\chi _{\Omega _{v}}\Vert _{{p,r}}^{q}\cdot 2^{q\beta
(v-k)/2}\right) \right] ^{\frac{1}{q}}.
\]%
Therefore, 
\begin{equation}
S_{1}\lesssim \Vert f\Vert _{HL_{a,q}^{p,r}}.  \label{S1}
\end{equation}%
Now $\displaystyle S_{2}=\left[ \sum\limits_{k\in I}2^{kaq}\left(
\sum\limits_{v=k-1}^{k+1}\Vert \chi _{\Omega _{k}}T(f\chi _{\Omega
_{v}})\Vert _{{p,r}}\right) ^{q}\right] ^{\frac{1}{q}}\leq \left[
\sum\limits_{k\in I}2^{kaq}\left( \sum\limits_{v=k-1}^{k+1}\Vert T(f\chi
_{\Omega _{v}})\Vert _{{p,r}}\right) ^{q}\right] ^{\frac{1}{q}}.$ 
Using the boundedness of $T$ on $L^{p,r}$ and Minkowski's inequality, we get 
\begin{eqnarray*}
S_{2} &\lesssim &\left[ \sum\limits_{k\in I}2^{kaq}\left(
\sum\limits_{v=k-1}^{k+1}\Vert f\chi _{\Omega _{v}}\Vert _{{p,r}}\right)
^{q}\right] ^{\frac{1}{q}} \\
&\lesssim &\left[ \sum\limits_{k\in I}2^{kaq}\Vert f\chi _{\Omega
_{k-1}}\Vert _{{p,r}}^{q}\right] ^{\frac{1}{q}}+\left[ \sum\limits_{k\in
I}2^{kaq}\Vert f\chi _{\Omega _{k}}\Vert _{{p,r}}^{q}\right] ^{\frac{1}{q}}+%
\left[ \sum\limits_{k\in I}2^{kaq}\Vert f\chi _{\Omega _{k+1}}\Vert _{{p,r}%
}^{q}\right] ^{\frac{1}{q}} \\
&\lesssim &\left[ \sum\limits_{k\in I}2^{kaq}\Vert f\chi _{\Omega
_{k}}\Vert _{{p,r}}^{q}\right] ^{\frac{1}{q}}.
\end{eqnarray*}%
Thus, 
\begin{equation}
S_{2}\lesssim \Vert f\Vert _{HL_{a,q}^{p,r}}.  \label{S2}
\end{equation}%
Finally, 
for $v\geq k+2$ and a.e. $x$ in $\Omega _{k}$, the size condition (\ref%
{sizeconditin}) and H\"{o}lder's inequality for Lorentz spaces imply that 
\[
|T(f\chi _{\Omega _{v}})(x)|\lesssim 2^{-vN}\Vert f\chi _{\Omega _{v}}\Vert
_{{p,r}}\Vert \chi _{\Omega _{v}}\Vert _{{p^{\prime },r^{\prime }}}.
\]%
Therefore, by similar calculations as in $S_{1}$, we get 
\begin{eqnarray*}
S_{3} &\lesssim &\left[ \sum\limits_{k\in I}2^{kaq}\left( \sum\limits_{v\geq
k+2}2^{va}\Vert f\chi _{\Omega _{v}}\Vert _{{p,r}}\cdot 2^{\frac{N}{p}%
(k-v)-va}\right) ^{q}\right] ^{\frac{1}{q}} \\
&\lesssim &\left[ \sum\limits_{k\geq -1}\left( \sum\limits_{v\geq
k+2}2^{va}\Vert f\chi _{\Omega _{v}}\Vert _{{p,r}}\cdot 2^{\beta ^{\prime
}(k-v)}\right) ^{q}\right] ^{\frac{1}{q}},
\end{eqnarray*}%
where $\beta ^{\prime }=\frac{N}{p}+a>0$. Using H\"{o}lder's inequality for
the inner sum, we obtain
\begin{eqnarray*}
S_{3} &\lesssim &\left[ \sum\limits_{k\geq -1}\left\{ \left(
\sum\limits_{v\geq k+2}2^{vaq}\Vert f\chi _{\Omega _{v}}\Vert _{{p,r}%
}^{q}\cdot 2^{q\beta ^{\prime }(k-v)/2}\right) \left( \sum\limits_{v\geq
k+2}2^{q^{\prime }\beta (k-v)/2}\right) ^{\frac{q}{q^{\prime }}}\right\} %
\right] ^{\frac{1}{q}} \\
&\lesssim &\left[ \sum\limits_{k\geq -1}\left( \sum\limits_{v\geq
k+2}2^{vaq}\Vert f\chi _{\Omega _{v}}\Vert _{{p,r}}^{q}\cdot 2^{q\beta
^{\prime }(k-v)/2}\right) \right] ^{\frac{1}{q}}.
\end{eqnarray*}%
Interchanging the order of summations, we have 
\begin{equation}
S_{3}\lesssim \left[ \sum\limits_{v\geq -1}\left(
\sum\limits_{k=-1}^{v-2}2^{vaq}\Vert f\chi _{\Omega _{v}}\Vert _{{p,r}%
}^{q}\cdot 2^{q\beta ^{\prime }(k-v)/2}\right) \right] ^{\frac{1}{q}%
}\lesssim \Vert f\Vert _{HL_{a,q}^{p,r}}.  \label{S3}
\end{equation}%
Combining the estimates (\ref{threesums}), (\ref{S1}), (\ref{S2}) and (\ref%
{S3}), we obtain $\Vert Tf\Vert _{HL_{a,q}^{p,r}}\lesssim \Vert f\Vert
_{HL_{a,q}^{p,r}}$. This completes the proof.
\end{proof}

\begin{remark}
The operator $T$ is bounded on $HL^{p,r}_{a,q}$ if and only if $T$ is
bounded on $HL^{(p,r)}_{a,q}$ for $1<p< \infty$, $1 \leq r \leq \infty$.
\end{remark}

The sublinear operators satisfying condition (\ref{sizeconditin}) have been studied by many authors, we refer to book \cite{Stein-1970}, see also \cite{BCZO,dual,sublinear1}. This condition is satisfied by several operators of critical importance in harmonic analysis. Some examples are
Caledr\'on-Zygmund operators, Hardy-Littlewood maximal operator,
Bochner-Riesz means, Carleson's maximal operator, and certain singular
integrals $($see \cite{ossc1, ossc2,sizecondition}$)$.

Since the Hardy-Littlewood maximal operator is bounded on $L^{p,r}(\mathbb{R}%
^N)$ for $1<p<\infty,~1 \leq r \leq \infty$ (see \cite{BMO}, Corollary 4 )
and the Caledr\'on-Zygmund operators are bounded on $L^{p,r}(\mathbb{R}^N)$
for $1<p < \infty,~1 \leq r < \infty$ (see \cite[Theorem 1.2]{BCZO}), we
have the following conclusions from Theorem \ref{BSO}.

\begin{corollary}
Let $1<p< \infty$ and $\frac{-N}{p}<a< \frac{N}{p^{\prime }}$.

\begin{enumerate}
\item If $1 \leq q,r \leq \infty$, then the Hardy-Littlewood maximal
operator is bounded on $HL^{p,r}_{a,q}(\mathbb{R}^N)$.

\item If $1 \leq r < \infty$ and $1 \leq q \leq \infty$, then the
Caledr\'on-Zygmund operators are bounded on $HL^{p,r}_{a,q}(\mathbb{R}^N)$.
\end{enumerate}
\end{corollary}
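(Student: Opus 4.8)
The plan is to recognize both operators as special cases of the sublinear operators handled by Theorem \ref{BSO}, so that the entire corollary reduces to checking, for each operator and in the indicated parameter range, the two hypotheses of that theorem: sublinearity together with the size condition (\ref{sizeconditin}), and boundedness on $L^{p,r}(\mathbb{R}^N)$. Once these are in place, the conclusion is immediate from Theorem \ref{BSO} under the constraint $-N/p<a<N/p'$.

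For part (1), I would first note that the Hardy-Littlewood maximal operator $M$ is sublinear, since $M(f+g)\le Mf+Mg$ pointwise and $M(\alpha f)=|\alpha|\,Mf$. For the size condition, recall that for $f\in L^1(\mathbb{R}^N)$ with compact support and $x\notin\operatorname{supp}(f)$, the averages of $|f|$ over balls centered at $x$ are dominated by $\int_{\mathbb{R}^N}|f(y)|\,|x-y|^{-N}\,d\mu(y)$, which is exactly (\ref{sizeconditin}); this is the assertion recorded in the preceding Remark. Combining this with the boundedness of $M$ on $L^{p,r}(\mathbb{R}^N)$ for $1<p<\infty$, $1\le r\le\infty$ (\cite{BMO}, Corollary 4), Theorem \ref{BSO} applies for every $1\le q,r\le\infty$, yielding boundedness of $M$ on $HL_{p,q}^{a,r}(\mathbb{R}^N)$.

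For part (2), a Calder\'on-Zygmund operator $T$ is linear, hence sublinear, and by definition satisfies $Tf(x)=\int_{\mathbb{R}^N}k(x,y)\,f(y)\,d\mu(y)$ for $x\notin\operatorname{supp}(f)$, with kernel obeying $|k(x,y)|\le C|x-y|^{-N}$; taking absolute values under the integral gives (\ref{sizeconditin}) directly. Since such operators are bounded on $L^{p,r}(\mathbb{R}^N)$ for $1<p<\infty$, $1\le r<\infty$ (\cite{BCZO}, Theorem 1.2), a second application of Theorem \ref{BSO}, now with $1\le r<\infty$ and $1\le q\le\infty$, delivers the stated boundedness on $HL_{p,q}^{a,r}(\mathbb{R}^N)$.

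I do not anticipate any genuine obstacle, as the corollary is a direct specialization of Theorem \ref{BSO}. The only steps demanding attention are the verifications of the size condition (\ref{sizeconditin}), which are standard for both operators and already flagged in the Remark, and the bookkeeping of the admissible ranges of $q$ and $r$. In particular, the restriction $1\le r<\infty$ in part (2) is forced not by our method but solely by the available $L^{p,r}$ mapping theory for Calder\'on-Zygmund operators, which need not extend to the endpoint $r=\infty$; by contrast, the maximal operator is bounded on $L^{p,\infty}$ as well, so part (1) retains the full range $1\le r\le\infty$.
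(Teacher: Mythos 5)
Your proposal is correct and follows essentially the same route as the paper, which deduces the corollary directly from Theorem \ref{BSO} by citing the boundedness of the Hardy-Littlewood maximal operator on $L^{p,r}(\mathbb{R}^N)$ for $1<p<\infty$, $1\le r\le\infty$ and of Calder\'on-Zygmund operators for $1<p<\infty$, $1\le r<\infty$, with the size condition (\ref{sizeconditin}) for both operators covered by the preceding Remark. Your explicit verifications of sublinearity and of the size condition, and your observation that the restriction $1\le r<\infty$ in part (2) comes only from the available $L^{p,r}$ mapping theory, are sound and merely make the paper's one-line argument more self-contained.
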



%
%
%
%
%
%

\section*{Acknowledgements}

The first author (M. Ashraf Bhat) is thankful to Prime Minister's Research
Fellowship (PMRF) program for the fellowship (PMRF ID: 2901480).

The second author's research (Pawe{\l} Kolwicz) was supported by the Poznan University of Technology under  Grant no. 0213/SBAD/0118.

\appendix

\section{Appendix A: Orlicz-Herz spaces}
The two most important generalizations of Lebesgue spaces $L^{p}$ are
Lorentz spaces $L^{p,r}$ and Orlicz spaces $L^{\Phi }.$ We will now consider the construction $E(\mathcal{X})$ using the family $\mathcal{X}=(X_{\alpha})_{\alpha\in I}$ of quasi-normed spaces as the Orlicz spaces $L^{\Phi}$, and $E$ as the weighted Lebesgue sequence space $l_{q}(w_{a})$ over $I$ with $w_{a}=w_{a}(k)=2^{ak}$. We will explore the basic properties of the resulting space, which we naturally call the Orlicz-Herz spaces denoted by $HO_{a,q}^{\Phi}$.


\begin{definition}
\label{phi} A function $\Phi :[0,\infty )\rightarrow \lbrack 0,\infty )$ is
called an \textit{Orlicz function} if $\Phi $ is non-decreasing, vanishing
and right continuous at $0$, continuous on $(0,\infty )$ and satisfying the
condition $\lim_{u\rightarrow \infty }\Phi (u)=\infty $. Any Orlicz function 
$\Phi $ determines a functional $I_{\Phi }:L^{0} \rightarrow \lbrack
0,\infty ]$ defined by the formula $I_{\Phi }(f)=\int_{\Omega }\Phi
(|f(t)|)d\mu (t)$. The order ideal 
\begin{equation*}
L^{\Phi }(\mu )=\{f\in L^{0}:I_{\Phi }(rf)<\infty \text{ for some }%
r>0\}
\end{equation*}%
in $L^{0}$ is called an Orlicz space. We consider the space $L^{\Phi
}(\mu )$ with the following functional: 
\begin{equation} \label{MO-norm}
\Vert f\Vert _{\Phi }=\inf\{\lambda >0:I_{\Phi }(f/\lambda )\leq
1\}.  
\end{equation}
\end{definition}

Denote by $\alpha _{\Phi }$ the lower Matuszewska-Orlicz index $\alpha
_{\Phi }$ (see \cite{Fo-Hu-Ko}, \cite{Kam-Mal-Per-2003}, \cite{Kam-Zyl} for
the definition). Recall that $\alpha _{\Phi }>0$ if and only if the
functional (\ref{MO-norm}) is a quasi-norm (see Theorem 1.8 in \cite{Kam-Zyl}%
). Thus, we assume always here that $\alpha _{\Phi }>0$. In consequence, $%
\left( L^{\Phi },\Vert \cdot \Vert _{\Phi }\right) $ is a quasi-Banach ideal
space. Moreover, if additionally the function $\Phi $ is convex, then $%
\left( L^{\Phi },\Vert \cdot \Vert _{\Phi }\right) $ is a Banach ideal
space. Obviously, in the case when $\Phi \left( u\right) =u^{p},$ for $u\geq
0,$ with $p>0,$ the Orlicz space  $L^{\Phi }$ is just equal to Lebesgue space 
$L^{p}.$ 

We refer to \cite{Fo-Hu-Ko}, \cite{Kam-Mal-Per-2003} and \cite%
{Kam-Zyl} for more details, references and definitions we will use below.
The first two mentioned papers deal with more general Calder\'{o}n--Lozanovski%
\u{\i} spaces $E_{\Phi },$ which become the Orlicz spaces for $E=L^{1}.$

If we replace in Definition \ref{DefLHS}  the spaces $X_{k}=L^{p,r}\left( \Omega
_{k}\right) $ with $X_{k}=\left( L^{\Phi }\left( \Omega _{k}\right) ,\Vert
\cdot \Vert _{\Phi }\right) $ we get the notion of Orlicz-Herz spaces which
we will denote by $HO_{a,q}^{\Phi }.$ Consequently, by Proposition \ref{ideal}, the spaces $HO_{a,q}^{\Phi }$ are quasi-normed ideal spaces (normed ideal spaces
if $\Phi $ is convex). Moreover, the Orlicz spaces $L^{\Phi }$ have the
Fatou property (see Lemma 4.12 in \cite{Fo-Hu-Ko} or Lemma 2.2 in \cite%
{Kam-Mal-Per-2003}, for more general case), whence, by Corollary \ref{KH-complete}, $
HO_{a,q}^{\Phi }$ are quasi-Banach ideal spaces (Banach ideal spaces if $%
\Phi $ is convex). 

Recall that $L^{\Phi }\in \left( OC\right) $ if and only $\Phi $ satisfies
the growth condition $\Delta _{2}$ suitable to the measure $\mu $ (that is $%
\Phi $ satisfies the condition $\Delta _{2}$ for all arguments if $\mu
\left( \Omega \right) =\infty $ or $\Phi $ satisfies the condition $\Delta
_{2}$ for large arguments if $\mu \left( \Omega \right) <\infty $) - see
Theorem 5.7 in \cite{Fo-Hu-Ko} with $E=L^{1}$. Thus, applying Theorem \ref{oc}, Lemma \ref{dens} and Lemma \ref{sep}, we conclude immediately

\begin{corollary}
Let $a\in \mathbb{R},q>0.$ The Orlicz-Herz spaces $HO_{a,q}^{\Phi
}$ are order continuous if and only if they are separable if and only if $%
q<\infty $ and $\Phi $ satisfies the condition $\Delta _{2}$ suitable to the
measure $\mu $ (that is $\Phi $ satisfies the condition $\Delta _{2}$ for
all arguments if $\mu \left( \Omega \right) =\infty $ or $\Phi $ satisfies
the condition $\Delta _{2}$ for large arguments if $\mu \left( \Omega
\right) <\infty $). Furthermore, in that case, the set of simple functions having the finite measure is dense in the spaces $HO_{a,q}^{\Phi
}$.    
\end{corollary}

Recall also that the embedings between Orlicz spaces $L^{\Phi _{1}}$ and $%
L^{\Phi _{2}}$ depend on the respective inequalities between functions $\Phi
_{1}$ and $\Phi _{2}$ (see Theorem 2.3 from \cite{Kam-Mal-Per-2003}).
Therefore, we can conclude easily analogous result to Theorem \ref{wloz} (we omit
the formulation).

It is well known that if $\Phi $ is a convex Orlicz function satisfying the condition $\Delta _{2}$ suitable to the measure $\mu ,$ then 
\begin{equation*}
\left( L^{\Phi },\Vert \cdot \Vert _{\Phi }\right) ^{\ast }=\left( L^{\Phi
},\Vert \cdot \Vert _{\Phi }\right) ^{^{\prime }}=\left( L^{\Phi ^{\ast
}},\Vert \cdot \Vert _{\Phi ^{\ast }}^{O}\right) ,
\end{equation*}%
where $\Phi ^{\ast }$ is the complementary Orlicz function to $\Phi $ (in
the sense of Young) and $\Vert \cdot \Vert _{\Phi ^{\ast }}^{O}$ is the
Orlicz norm with respect to $\Phi ^{\ast },$ that is,%
\begin{equation*}
\Phi ^{\ast }\left( u\right) =\sup_{v\geq 0}\left\{ uv-\Phi \left( v\right)
\right\} \text{ and }\Vert f\Vert _{\Phi ^{\ast }}^{O}=\sup \left\{ \int
fg:I_{\Phi ^{\ast }}(g)\leq 1\right\} 
\end{equation*}
(see \cite{Fo-Hu-Ko}, \cite{Kam-Mal-Per-2003}, \cite{Kam-Zyl} for respective
references). Consequently, applying Corollary \ref{Kothe-dual}, we get

\begin{corollary} \label{Kothe-dual2} [\textbf{The K\"{o}the dual of Orlicz-Herz spaces}]
Denote by $\mathcal{X}=\left( \left( L^{\Phi }\left( \Omega _{k}\right)
,\Vert \cdot \Vert _{\Phi }\right) \right) _{k\in I}$ the family of Orlicz
spaces and set $\mathcal{X}^{\prime }=\left( \left( L^{\Phi ^{\ast }}\left(
\Omega _{k}\right) ,\Vert \cdot \Vert _{\Phi ^{\ast }}^{O}\right) \right)
_{k\in I}$. Let $E=l_{q}\left( 2^{ak}\right) $ with $a\in \mathbb{R}$ and $%
1<q<\infty .$ Then $E^{\prime }=l_{q^{\prime }}\left( 2^{-ak}\right) ,$
where $q^{\prime }$ is the conjugate exponents to $q.$ If $\Phi $ is a
convex function satisfying the condition $\Delta _{2}$ suitable to the
measure $\mu $, then%
\begin{equation*}
\left( HO_{a,q}^{\Phi },\left\Vert \cdot \right\Vert _{HO_{a,q}^{\Phi
}}\right) ^{\ast }=\left( HO_{a,q}^{\Phi },\left\Vert \cdot \right\Vert
_{HO_{a,q}^{\Phi }}\right) ^{^{\prime }}=\left( HO_{-a,q^{\prime }}^{\Phi
^{\ast }},\left\Vert \cdot \right\Vert _{E^{\prime }\left( \mathcal{X}%
^{\prime }\right) }\right) 
\end{equation*}%
with equivalent norms. Furthermore, in this case a H\"{o}lder type
inequality (as in Proposition \ref{HolderIq}) follows.
\end{corollary}

The elementary calculations show that 
\begin{equation*}
\left\Vert \chi _{A}\right\Vert _{\Phi }=\frac{1}{\Phi ^{-1}\left( 1/\mu
\left( A\right) \right) },
\end{equation*}%
where $\Phi ^{-1}$ is the generalized inverse of $\Phi .$ Furthermore, it is very well known that $\left\Vert \cdot \right\Vert _{\Phi }\sim \left\Vert \cdot
\right\Vert _{\Phi }^{O}.$ Thus, applying Theorem \ref{BFS1}, we get

\begin{corollary}
Denote by $q^{\prime }$ the conjugate exponents to $q$ with $1<q<\infty $.
Suppose $\Phi $ is a convex Orlicz function satisfying the condition $\Delta
_{2}$ suitable to the measure $\mu $. Then the Orlicz-Herz space $HO_{a,q}^{\Phi }$ is a Banach function space (in
the sense of Definition \ref{function norm}) if and only if for every $A\in
\Sigma $ with $\mu \left( A\right) <\infty $, the following conditions hold:
\begin{enumerate}
\item[(a)] $\sum\limits_{k\in I}2^{kaq}\left( \frac{1}{\Phi ^{-1}\left(
1/\mu \left( A\cap \Omega _{k}\right) \right) }\right) ^{q}<\infty $,

\item[(b)] $\sum\limits_{k\in I}2^{-kaq^{^{\prime }}}\left( \frac{1}{\left(
\Phi ^{\ast }\right) ^{-1}\left( 1/\mu \left( A\cap \Omega _{k}\right)
\right) }\right) ^{q^{\prime }}<\infty $.
\end{enumerate}
\end{corollary}

\end{document}